\documentclass[11pt]{amsart}
\usepackage{a4wide,enumerate,enumitem,amsmath,color}
\usepackage[utf8]{inputenc}
\usepackage{amssymb}
\usepackage{amsthm}
\usepackage{graphicx}
\usepackage{subcaption}
\usepackage[noadjust]{cite}
\usepackage{bigints}
\usepackage{breqn}
\usepackage{tikz}
\usepackage{mathtools}
\usepackage{hyperref}
\usepackage{cite}
\usepackage{dsfont}
\usepackage {float}
\usepackage{caption}
\newtheorem{theorem}{Theorem}[section]

\newtheorem{lemma}{Lemma}
\newtheorem{proposition}{Proposition}
\newtheorem{remark}{Remark}

\newcommand{\dd}{\mathrm{d}}
\newcommand{\pa}{\partial}

\newcommand{\R}{\mathbb{R}}

\newcommand{\N}{\mathbb{N}}
\newcommand{\eps}{\varepsilon}

\newcommand{\rhosc}{\rho_{{\rm s}}}
\newcommand{\rhopc}{\rho_{{\rm p}}}
\newcommand{\rhoent}{\rho_{{\rm e}}}
\newcommand{\rhogc}{\rho_{{\rm g}}}

\newcommand{\rhose}{\rho^\eps_{{\rm s}}}
\newcommand{\rhope}{\rho^\eps_{{\rm p}}}
\newcommand{\rhoee}{\rho^\eps_{{\rm e}}}
\newcommand{\rhoge}{\rho^\eps_{{\rm g}}}

\newcommand{\fsc}{f_{{\rm s}}}
\newcommand{\fpc}{f_{{\rm p}}}
\newcommand{\fent}{f_{{\rm e}}}
\newcommand{\fgc}{f_{{\rm g}}}

\newcommand{\T}{\mathcal{T}}

\newcommand{\Esc}{\mathcal{E}_{\rm s}}
\newcommand{\Epc}{\mathcal{E}_{\rm p}}
\newcommand{\Eent}{\mathcal{E}_{\rm e}}
\newcommand{\Egc}{\mathcal{E}_{\rm g}}

\newcommand{\vsc}{v_{\rm s}}

\newcommand{\wse}{w^\eps_{{\rm s}}}
\newcommand{\wpe}{w^\eps_{{\rm p}}}
\newcommand{\wee}{w^\eps_{{\rm e}}}
\newcommand{\wge}{w^\eps_{{\rm g}}}

\begin{document}

\title[Analysis of a degenerate parabolic system]{Analysis of a degenerate parabolic system for cell dynamics in intestinal crypts}

\author[A. El Hajj]{Ahmad El Hajj}
\author[M. EL HAJJ CHEHADE]{Mohamad El Hajj Chehade}
\author[A. Zurek]{Antoine Zurek}
\address{Universit\'e de Technologie de Compi\`egne, LMAC, 60200 Compi\`egne, France}
\email{elhajjah@utc.fr,mohamad.el-hajj-chehade@utc.fr,antoine.zurek@utc.fr}

\date{\today}

\thanks{The authors thank B. Laroche for fruitful discussions on the biological meaning of the model introduced in~\cite{DHCLL22,darri2020} and the derivation of a dimensionless version of this system.}

\begin{abstract}
In this work, we study a system of degenerate parabolic equations modeling the dynamics of multiple cell populations in intestinal crypts. The model describes cell division, differentiation, and migration through a strongly coupled system of reaction-cross-diffusion equations with degenerate diffusion. By working with initial data in $BV$, we first consider a regularized form of the system and establish uniform $BV$ estimates. Using these bounds, we then pass to the limit to obtain the existence of weak solutions.

\bigskip
		
\noindent\textbf{Mathematics Subject Classification (2020):} 35K20, 35K59, 35K65, 35Q92, 92D25.

\medskip
		
\noindent\textbf{Keywords:} Degenerate cross-diffusion systems, diffusion-reaction system, existence of weak solutions.

\end{abstract}

\maketitle
\tableofcontents

\section{Introduction}

In this paper, we analyze a system of PDEs inspired by the one introduced in~\cite{DHCLL22,darri2020} to model the dynamics of cells in colonic crypts. Crypts are microscopic invaginations of the intestinal epithelium, looking like tiny vertical pits in the intestinal mucosa.  The shape of a crypt can be broadly approximated by that of a test-tube. The crypt is lined by several cell types, restricted here to five main categories, namely stem cells (s), deep crypt secretory cells (dcs), progenitor cells (p), enterocytes (e), and goblet cells (g). In particular, as represented in Figure~\ref{Fig1}, the crypts are organized in three main areas: stem and dcs cells at the bottom, progenitor cells in the middle, and enterocyte and goblet cells at the top of the crypts. It should be understood that Figure~\ref{Fig1} is only a schematic representation of these zones and in reality the zones are not strongly segregated, i.e., at each interface we can find a mixture of different types of cells.

\begin{figure}[!ht]
\includegraphics[scale=0.7]{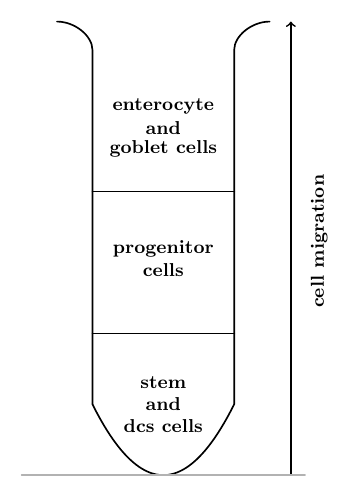}
\caption{Schematic representation of the repartition of the cell colonies inside the colonic crypt.}\label{Fig1}
\end{figure}

These crypts play a key role in the production of epithelial cells. Indeed, these epithelial cells are produced at the bottom of the crypt by stem cells and are removed at the top of the crypt by extrusion. More precisely, the stem cells, located at the bottom of the crypt, can create other stem cells by division and can also produce progenitor cells by differentiation. These progenitor cells can also produce, by division or differentiation, progenitors, enterocyte and goblet cells. Finally, the enterocyte and goblet cells are extruded when they reach the top of the crypt. Therefore, along the way, the epithelial cells move from the bottom to the top of the crypt by migration. This migration is due to the ``pressure'' exerted by all the cells present in the crypt. Besides, in~\cite{DHCLL22,darri2020}, the dcs cells are seen as a sedentary colony of cells which ensures a favorable environment for the stem cells. Eventually, as described in~\cite{DHCLL22,darri2020}, the concentrations of butyrate and oxygen have an impact on the evolution of epithelial cells inside the crypts.
 
From a mathematical point of view, the original model introduced in~\cite{DHCLL22, darri2020} is a cross-diffusion system coupled with a reaction-diffusion system. The cross-diffusion system models the evolution of the densities of cells present in the crypts, while the reaction-diffusion system models the evolution of the concentrations of oxygen and butyrate. The coupling of the systems is made through the reaction terms. The derivation of this PDE system is obtained as follows. First, a cell centered Piecewise Deterministic Markov Process (PDMP)~\cite{Azais_reviewPDMP14,Davis84} models the evolution of cells at the microscopic level, while the evolution of the concentrations of oxygen and butyrate is modelled at the macroscopic level thanks to a reaction-diffusion system. Then, in a second step, it is shown that in the mean field limit, this microscopic model tends to a deterministic (macroscopic) nonlocal system of PDEs on the densities of cells in the crypts, still coupled with the reaction-diffusion system on the chemical concentrations. Finally, a formal localization limit is performed to derive the system of local PDEs.

On the one hand, the PDMP model allows a fine description at the microscopic level of each cell in the crypts. However, its simulation cost is prohibitive and such a model is not suitable for intensive simulation. This restricts its use for applications. On the other hand, even if the macroscopic PDEs system derived from the PDMP model is less accurate, this system is an appropriate approximation of the microscopic one as shown numerically in~\cite{DHCLL22,darri2020}, and, of course, its computational cost is more affordable. Therefore, our main objective in this first work is to study at the theoretical level a (simplified) version of this PDE system. This will allow us in future works to develop and analyse an efficient and well-adapted numerical scheme.


\subsection{Outline of the paper}

The paper is organized as follows. Section~\ref{sec2} is dedicated to the presentation of the mathematical model and our existence result. Then, we show this result in Section~\ref{subsec : regul} and Section~\ref{sec : exi proof}. We first introduce and study, in Section~\ref{subsec : regul}, a convenient regularized system. Next, in Section~\ref{sec : exi proof}, we establish some uniform estimates and we pass to the limit on this regularized system. This will allow us to conclude the proof of our main result.


\section{Mathematical model and main result}\label{sec2}

\subsection{Introduction of the mathematical model}

In this first study, we will focus on the evolution, on $\Omega := (0,1)$, of the densities of $\rhosc$, $\rhopc$, $\rhoent$ and $\rhogc$ and the concentration of butyrate $c_{\rm b}$. In particular, we will neglect the densities of dcs cells as well as the concentration of oxygen. We introduce the set $\T := \{{\rm s},{\rm p}, {\rm e}, {\rm g}\}$. Then, the system writes as follows
\begin{align}\label{eq : rhosc}
\pa_t \rhosc - \pa_x(\rhosc \pa_x \rho) &= \fsc\left(x,\rho,\rhosc, c_{\rm b}\right), \quad \,\,\,\quad\mbox{in }\Omega \times (0,T),\\
\pa_t \rhopc - \pa_x(\rhopc \pa_x \rho) &= \fpc\left(x,\rho,\rhopc,\rhosc, c_{\rm b}\right), \quad \mbox{in }\Omega \times (0,T),\label{eq : rhopc}\\
\pa_t \rhoent - \pa_x(\rhoent \pa_x \rho) &= \fent\left(x,\rho,\rhoent,\rhopc, c_{\rm b}\right), \quad \mbox{in }\Omega \times (0,T),\label{eq : rhoent}\\
\pa_t \rhogc - \pa_x(\rhogc \pa_x \rho) &= \fgc\left(x,\rho,\rhogc,\rhopc, c_{\rm b}\right), \quad \mbox{in }\Omega \times (0,T),\label{eq : rhogc}\\
\pa_t c_{\rm b} - \sigma_{\rm b} \pa_x^2 c_{\rm b} &= \gamma \dfrac{c_{\rm b} + c_{\rm b}^d}{1+c_{\rm b} + c_{\rm b}^d} \, (\rhoent + \rhogc), \,\quad \, \mbox{in } \Omega \times (0,T)\label{eq : cb},
\end{align}
where $\rho = \sum_{i\in\T} \rho_i$ denotes the total density of cells and $\sigma_{\rm b}$, $\gamma$, and $c_{\rm b}^d$ are positive constants. 

Moreover, in the initial model introduced in~\cite{DHCLL22,darri2020}, a correction function $\phi$ only depending in space is included in the definition of the fluxes of each cell densities, i.e., the fluxes are of the form $-\phi \rho_i \pa_x \rho$ for all $i \in \T$. This function is nonnegative and vanishes at $x=0$ and $x=1$. In fact, in the microscopic model, the authors of~\cite{DHCLL22,darri2020} assume that the crypt admits a cylinder symmetry around its vertical axis so that the crypt can be represented as a two dimensional symmetric domain (as in Figure~\ref{Fig1}). Then, the center of each cell in the crypt is projected on a one dimensional axis, and the dynamics of these projected centers is modeled. The function $\phi$ is introduced to take into account the impact of the curvature at the bottom and top of the crypt on the mechanical interaction between cells. In this paper, we neglect this correction function. This system is complemented by the following boundary and initial conditions:
\begin{align}
\pa_x \rho_i(0,t) &= \pa_x \rho_i(1,t) = 0 \quad \mbox{for } t\in (0,T) \mbox{ and } i \in \T,\label{boundary rhoi}\\ 
\pa_x c_{\rm b}(0,t) &= 0, \quad c_{\rm b}(1,t) = 0 \quad \mbox{for } t \in (0,T),\label{boundary cb}\\
\rho_i(x,0) &= \rho_i^0(x) \quad \mbox{for } x \in \Omega \mbox{ and }  i \in \T,\\
c_{\rm b}(x,0) &= c_{\rm b}^0(x) \quad \mbox{for } x \in \Omega.\label{initial cb}
\end{align}

\begin{remark}
    In~\cite{DHCLL22,darri2020}, the function $c_b$ satisfies at $x=1$ the non-homogeneous Dirichlet boundary condition $c_{\rm b}(1,t) = c_{\rm b}^d$ and the source term in~\eqref{eq : cb} is replaced by $\gamma \,c_{\rm b}\,(\rhoent+\rhogc)/(1+c_{\rm b})$. Of course, introducing the function $\widetilde{c}_{\rm b} = c_{\rm b} + c^d_{\rm b}$, both formulation are equivalent.
\end{remark}

Before giving the precise definitions of the source terms of the above system, let us notice that cross-diffusion systems of the type~\eqref{eq : rhosc}--\eqref{eq : rhogc} intervene, for instance, in dynamic cell populations~\cite{MurTog15,CHS18} or cancer invasion models~\cite{ChaLol05,Preziosi03,APS06}. Indeed, this type of system is well known to describe interacting biological species that disperse in response to population pressure.  Here, as for instance described in~\cite{GuPi84}, it is assumed that the population pressure function $p$ is given by Darcy's law, i.e., $p(\rho)=-\pa_x\rho$. However, more general definition of $p$ can be used to model different phenomena, see Section~\ref{sec : difficulties}.

Now, following~\cite{DHCLL22}, the source terms are defined as
\begin{align}\label{def : fsc}
\fsc(x,\rho,\rhosc, \, c_{\rm b} ) &= \rhosc \, q_{{\rm div,s}} \, \big(1-R_{{\rm div,s}}(x)\big)\,\big(1-\overline{R}_{{\rm div,s}}(\rho)\big)\big(1-\underline{R}_{{\rm div,s}}(c_{\rm b})\big)\\ 
&- \rhosc \, q_{{\rm s,p}} \, R_{\rm s, p}(x),\nonumber\\
\fpc(x,\rho,\rhopc,\rhosc, c_{\rm b} ) &= \rhopc\, q_{{\rm div,p}}\, \big(1-R_{{\rm div,p}}(x)\big) \, \big(1-\overline{R}_{{\rm div,p}}(\rho)\big)\label{def : fpc}\\
&- \rhopc \, q_{{\rm p,e}} \, R_{{\rm p,e}}(x) \underline{R}_{{\rm p,e}}(c_{\rm b})\nonumber\\
&- \rhopc \, q_{{\rm p,g}} R_{{\rm p,g}}(x) \underline{R}_{{\rm p,g}}(c_{\rm b}) + \rhosc \, q_{{\rm s,p}} R_{{\rm s,p}}(x),\nonumber\\
\fent(x,\rho,\rhoent, \rhopc,c_{\rm b}) &= \rhopc \, q_{{\rm p,e}} R_{{\rm p,e}}(x) \underline{R}_{{\rm p,e}}(c_{\rm b}) - \rhoent \, q_{{\rm ex,e}} \, R_{{\rm ex,e}}(x) \, \overline{R}_{{\rm ex,e}}(\rho),\label{def : fent}\\
\fgc(x,\rho,\rhogc,\rhopc, c_{\rm b} ) &= \rhopc \, q_{{\rm p,g}} R_{{\rm p,g}}(x) \underline{R}_{{\rm p,g}}(c_{\rm b}) - \rhogc \, q_{\rm ex, g} \, R_{\rm ex, g}(x) \, \overline{R}_{\rm ex, g}(\rho),\label{def : fgc}
\end{align}
where $q_{.,.}$ are given positive constants and the functions $R_{\cdot,\cdot}$, $\overline{R}_{\cdot,\cdot}$ and $\underline{R}_{\cdot,\cdot}$ share the same structural form. In~\cite{DHCLL22}, for $y \in \R$, these functions are defined through the following generic expression:
\begin{align*}
\left\{
    \begin{array}{ll}
        0 &\mbox{if } y \leq K-\kappa, \\
        -\frac{1}{4\kappa^3} y^3 + \frac{3K}{4\kappa^3} y^2 - \frac{3K^2-3\kappa^2}{4\kappa^3}y+\frac{K^3+2\kappa^3-3 K \kappa^2}{4\kappa^3} &\mbox{if } K -\kappa \leq y \leq K + \kappa,\\
        1 &\mbox{if } y \geq K + \kappa,
    \end{array}
\right.
\end{align*}
where $\kappa$ and $K$ are positive constants with $K-\kappa>0$. With this precise definition, the functions are smooths. Instead, we will generalize this assumption, see~{\bf (H3)} below. Finally, for later uses we also introduce the sets
\begin{align}
\Esc := \{ {\rm s}\}, \quad \Epc :=\{{\rm p, s}\}, \quad \Eent :=\{{\rm e,p}\}, \quad \Egc   :=\{{\rm g, p}\},
\end{align}
and
\begin{align}\label{def : f}
f(x,\rho,\rhosc,\rhopc,\rhoent,\rhogc,c_{\rm b}) &:= \sum_{i \in \T} f_i(x,\rho,(\rho_j)_{j\in\mathcal{E}_i},c_{\rm b})\\
&=\rhosc \, q_{{\rm div,s}} \, \big(1-R_{{\rm div,s}}(x)\big)\,\big(1-\overline{R}_{{\rm div,s}}(\rho)\big)\big(1-\underline{R}_{{\rm div,s}}(c_{\rm b})\big)\nonumber\\
&+\rhopc\, q_{{\rm div,p}}\, \big(1-R_{{\rm div,p}}(x)\big) \, \big(1-\overline{R}_{{\rm div,p}}(\rho)\big)\nonumber\\
&- \rhoent \, q_{{\rm ex,e}} \, R_{{\rm ex,e}}(x) \, \overline{R}_{{\rm ex,e}}(\rho)- \rhogc \, q_{\rm ex, g} \, R_{\rm ex, g}(x) \, \overline{R}_{\rm ex, g}(\rho).\nonumber
\end{align}

The source terms model different events for each type of cells, namely symmetric division, differentiation or death by extrusion. For instance, the first term in~\eqref{def : fsc} models the symmetric division of stem cells while the second models the differentiation of stem cells into progenitor cells. Moreover, in~\eqref{def : fsc} and thanks to the assumptions on the functions $R_{{\rm div, sc}}$, $\overline{R}_{{\rm div, sc}}$ and $\underline{R}_{{\rm div, sc}}$, we observe that the symmetric division is regulated by three different factors: the position in the crypt, the total density of cells and the concentration of butyrate. We refer the interested reader to~\cite[Section 2.3]{DHCLL22} for a thorough discussion concerning the biological interpretations of the definitions~\eqref{def : fsc}--\eqref{def : fgc}.


\subsection{Mathematical difficulties and related studies}\label{sec : difficulties}

If the coupling between the cross-diffusion part and the diffusion-reaction part of the system~\eqref{eq : rhosc}--\eqref{def : fgc} needs to be treated with care, the major mathematical issues for the analysis of this model arise from its cross-diffusion part. Indeed, it is nowadays well known that the analysis of cross-diffusion systems is challenging~\cite{Jue15}. In particular, the nonlinearities as well as the strong coupling between the equations of these systems make the establishment of convenient estimates on the unknown functions delicate. Moreover, in the present system, the lack of linear diffusion terms on the densities makes the derivation of Sobolev type estimates on $\rho_i$ even more delicate. In our subsequent analysis, we will show that the $\rho_i$ are BV functions in space.

In fact, it has been shown in several papers that cross-diffusion systems of the form~\eqref{eq : rhosc}--\eqref{eq : rhogc} exhibit, in one or several space dimensions, a segregation phenomenon, see~\cite{BGHP85,BGH871,BGH872}, for the case without source terms and~\cite{BPM10,BHIM12,BHIMW20} for the case with Lotka-Volterra source terms. This means that if initially the densities have compact and disjoint supports, then this property is preserved along time. In particular, the densities are discontinuous functions in space, i.e., these functions are merely BV in space. The segregation property is mainly due to the parabolic-hyperbolic structure of the equations~\eqref{eq : rhosc}--\eqref{eq : rhogc}. Indeed, neglecting the source terms to simplify the presentation, it is clear that $\rho$ solves a degenerate parabolic equation, while $\rho_i$ is a solution to a conservative transport equation with velocity $-\pa_x \rho$ for any $i \in \T$. 

However, here the situation is a bit different and it has been illustrated numerically in~\cite{DHCLL22,darri2020}, that such segregation phenomenon does not seem to occur for the system~\eqref{eq : rhosc}--\eqref{def : fgc}. This is mainly due to the definitions of the source terms in the model. Indeed, if it was shown previously in the literature that such phenomena occur in cross-diffusion systems of the form~\eqref{eq : rhosc}--\eqref{eq : rhogc} with reaction terms, the authors always (to our best knowledge) considered Lotka-Volterra type reaction (or at least no cross-reaction terms, see~\cite{CFSS_JKO_18}). In particular, for these type of reaction terms the coupling term is usually quadratic, i.e., the product of two different densities. However, in the definitions~\eqref{def : fsc}--\eqref{def : fgc}, some coupling terms depend linearly on the densities. For instance, the differentiation of stem cells into progenitor cells is modelled in~\eqref{def : fsc} and~\eqref{def : fpc} through $\pm \rhosc \, q_{{\rm s,p}} \, R_{\rm s, p}(x)$ and similarly for the other reaction terms. Therefore, it is expected to observe at each colony interfaces a mixture of different type of cells. The study of the regularity of these interfaces will be the subject of future research studies. However, in this first work, as for instance in~\cite{BPM10,BGH872,BGHP85,BGH871,BHIMW20,GalSel15 }, we will only look for partial densities $\rho_i$ admitting BV regularity.

The analysis performed in~\cite{BPM10,BGH872,BGHP85,BGH871,BHIMW20}, in dimension one, are based on change of variables together with a fine analysis of the associated free boundary problems. In~\cite{BHIM12}, the authors also used the theory of renormalized solutions introduced by Di Perna and Lions~\cite{DL89} to compensate the lack of suitable BV estimates in higher dimension. If these techniques are elegant and well adapted for the theoretical analysis of systems of the form~\eqref{eq : rhosc}--\eqref{eq : rhogc}, it seems difficult to adapt these methods at the discrete level to prove the convergence of a numerical scheme. Instead, we will adapt in our context the vanishing viscosity method developed in~\cite{GalSel15}.

Before to conclude this section, let us give a wider and non exhaustive overview of the analysis of parabolic-hyperbolic cross-diffusion systems. In particular, we refer to the series of papers~\cite{DrHoJu23,DrJu20,HoJu25}, where the authors study and give a detailed state of the art concerning the analysis of such systems. We also mention~\cite{CFSS_JKO_18}, where existence and segregation results are obtained in one space dimension thanks to an optimal transport approach. If in~\eqref{eq : rhosc}--\eqref{eq : rhogc}, we only consider a pressure function $p$ given by $p(\rho)=-\pa_x \rho$, in~\cite{CFSS_JKO_18} (see also~\cite{BPM10}), the authors consider the more general case $p(\rho)=-\pa_x \chi'(\rho)$ where the function $\chi$ described an internal energy density satisfying convenient regularity assumptions. Similarly, we mention~\cite{PQV14,BPPS20,PerVau15} and references therein, where the authors study systems of the form~\eqref{eq : rhosc}--\eqref{eq : rhogc} (with different source terms) and where $p(\rho) \approx \rho^{m-1}$ with $m>1$. These systems arise in the modelling of cancer development. In particular, in these papers the authors study the limit $m \to \infty$ and derive rigorously free boundary models of Hele-Shaw type.


\subsection{Assumptions and main result}\label{sec : main results}

In this section, we present our main result and explain the main ideas of its proof. Let us first introduce some functional spaces. In particular, we define the space $BV(\Omega)$ as
$$
BV(\Omega)=\left\{g \in L^{1}(\Omega) \, : \, T V(g)<+\infty\right\},
$$
where $TV(g)$ denotes the total variation of $g$ in $\Omega$ given by:
\begin{align}\label{def TV}
TV(g)=\sup \left\{ \int_{\Omega} g(x)\, \varphi'(x) \, dx \, : \, 
\varphi \in C_c^1(\Omega), \, \|\varphi\|_\infty \le 1 \right\}.
\end{align}
We also introduce the space
\begin{align}\label{def : H}
H :=\{ c \in H^1(\Omega) \, : \, c(1) = 0 \},
\end{align}
endow with the $H^1(\Omega)$ norm. Now, let us gather our main assumptions:
\begin{itemize}
\item[\textbf{(H1)}] Initial data for the densities: For any $i \in \T$, the function $\rho^0_i$ is nonnegative and bounded such that $\rho^0 = \sum_{i \in \T} \rho^0_i \in H^1(\Omega)$. Moreover, we assume that 
\begin{equation}
    w^0_i(x) \coloneq \frac{\rho_i^0}{\rho^0}(x) \in~BV(\Omega).
    \end{equation}
\item[\textbf{(H2)}] Initial data for the concentration: The nonnegative function $c_{\rm b}^0$ belongs to $H^1(\Omega)$.

\item[\textbf{(H3)}] Parameters: The constants $q_{.,.}$ appearing in the definitions~\eqref{def : fsc}--\eqref{def : fgc} are positive as well as $\sigma_{\rm b}$, $\gamma$ and $c_{\rm b}^d$.

\item[\textbf{(H4)}] Reaction terms: The functions $R_{\cdot,\cdot}$, $\overline{R}_{\cdot,\cdot}$ and $\underline{R}_{\cdot,\cdot}$ appearing in the definitions~\eqref{def : fsc}--\eqref{def : fgc} are $W^{1,\infty}$ non-decreasing functions with values in $[0,1]$ and equal to $0$ on $\R_-$. Moreover, there exists a common $\overline{M}>0$ such that $\overline{R}_{{\rm div,s}}$, $\overline{R}_{{\rm div,p}}$, $\overline{R}_{{\rm ex,e}}$ and $\overline{R}_{{\rm ex,g}}$ are equal to $1$ on $(\overline{M},+\infty)$. 
\end{itemize}

\begin{remark}
In contrast with some existing results in the literature dealing with the existence of solutions to systems of the type~\eqref{eq : rhosc}--\eqref{eq : rhogc}, see for instance~\cite{GalSel15}, we do not assume that $\rho^0$ is positive. This is biologically meaningful in our context, since initially only the bottom of the crypt is occupied by stem cells.
\end{remark}

Then, the main objective of this paper is to prove the following:

\begin{theorem}[Existence of weak solutions]\label{thm1}
Let the assumptions~\emph{{\bf (H1)}--{\bf (H4)}} hold. Then, there exist nonnegative functions $\rhosc$, $\rhopc$, $\rhoent$ and $\rhogc$ such that $\rho_i \in L^\infty(0,T;BV(\Omega))$ with $\pa_t \rho_i \in L^2(0,T;(H^2)'(\Omega))$ for any $i \in \T$ and $\rho\in L^2(0,T;H^1(\Omega))$. Moreover, there exists a nonnegative function $c_{\rm b} \in L^2(0,T;H)$ with $\pa_t c_{\rm b} \in L^2(0,T;H')$ where $H'$ denotes the dual space of $H$. Furthermore, for any $\varphi \in L^2(0,T;H^1(\Omega))$ and $i \in \T$, it holds
\begin{align} \label{ eq rhoi weak final}
\int_{\Omega \times (0,T)} \pa_t \rho_i \, \varphi \, \dd x \dd t + \int_{\Omega \times (0,T)} \rho_i \, \pa_x \rho \, \pa_x \varphi \, \dd x \dd t= \int_{\Omega \times (0,T)} f_i(x,\rho,(\rho_j)_{j \in \mathcal{E}_i},c_{\rm b}) \, \varphi \, \dd x \dd t,
\end{align}
and for all $\psi \in L^2(0,T;H)$
\begin{multline}\label{ eq cb weak final }
\int_{\Omega \times (0,T)} \pa_t c_{\rm b} \, \psi \, \dd x \dd t +\sigma_{\rm b} \int_{\Omega \times (0,T)} \pa_x c_{\rm b} \, \pa_x \psi \, \dd x \dd t\\= \gamma\int_{\Omega \times (0,T)} \dfrac{c_{\rm b}+c_{\rm b}^d}{1+c_{\rm b}+c_{\rm b}^d}\, (\rhoent+\rhogc) \, \psi \, \dd x \dd t,
\end{multline}
where we recall definition~\eqref{def : H} of the space $H$.
\end{theorem}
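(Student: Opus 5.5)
The plan is to use a vanishing viscosity regularization, in the spirit of~\cite{GalSel15}, expressed in the variables $(\rho, w_i)$ with $w_i = \rho_i/\rho$.

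\textbf{Step 1: regularized system and its well-posedness.} For $\eps>0$ we add $\eps\,\pa_x^2\rho_i$ to each equation~\eqref{eq : rhosc}--\eqref{eq : rhogc}, keeping the Neumann conditions, and regularize the initial data so that $\rho^{0,\eps}\ge \eps$ is smooth. Summing the equations shows that $\rho^\eps$ solves the uniformly parabolic scalar equation
\[
\pa_t\rho^\eps - \pa_x\bigl((\rho^\eps+\eps)\pa_x\rho^\eps\bigr) = f.
\]
Given $\rho^\eps$, each $\rho_i^\eps$ solves a linear advection--diffusion equation with a triangular linear coupling in the reaction terms. We obtain a local solution by a Schauder or Leray--Schauder fixed point on $(\rho_i^\eps, c_{\rm b}^\eps)$, coupled to the linear heat equation~\eqref{eq : cb}, whose right-hand side is bounded once the densities are bounded.

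Nonnegativity of all unknowns follows from the structure of the reactions. Each $f_i$ is of the form $-\rho_i a_i + (\text{nonnegative terms})$ with $a_i$ bounded, so a Stampacchia argument applies. An $L^\infty$ bound on $\rho^\eps$ follows from \textbf{(H4)}: the only positive terms in $f$ carry the factor $(1-\overline R_{{\rm div},\cdot}(\rho))$, which vanishes for $\rho>\overline M$. The maximum principle then gives $\rho^\eps\le \max(\|\rho^0\|_\infty,\overline M)$.

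\textbf{Step 2: uniform estimates.} Testing the $\rho^\eps$-equation with $\pa_t\rho^\eps$, or with $\rho^\eps$ combined with a time-differentiated energy, gives $\rho^\eps$ bounded in $L^2(0,T;H^1)$. Combined with the bound $\int\!\!\int \rho^\eps|\pa_x\rho^\eps|^2$, this is uniform in $\eps$, using $\rho^0\in H^1$. For $c_{\rm b}^\eps$, standard energy estimates in $L^2(0,T;H)$ hold with $\pa_t c_{\rm b}^\eps$ bounded in $L^2(0,T;H')$.

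The key estimate is a uniform BV bound on the fractions $w_i^\eps=\rho_i^\eps/\rho^\eps$. Each satisfies a transport--diffusion equation
\[
\pa_t w_i - \bigl(\pa_x\rho + 2\eps\,\pa_x\rho/\rho\bigr)\pa_x w_i - \eps\pa_x^2 w_i = \frac{f_i - w_i f}{\rho}.
\]
The right-hand side is a linear combination of the $w_j$ with bounded, $x$-Lipschitz coefficients, since each $f_i$ is linear in $(\rho_j)$ times bounded functions; for example $\rhosc\,q_{{\rm s,p}}R_{\rm s,p}/\rho = q_{{\rm s,p}}R_{\rm s,p}\,\wse$. We differentiate in $x$, multiply by ${\rm sign}(\pa_x w_i)$ (regularized), and integrate. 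The transport part contributes only a boundary term, which vanishes by the Neumann conditions, together with a term whose sign is favorable; this is the $L^1$-contraction of $\pa_x w$ for one-dimensional transport. The reaction part contributes $C\sum_j\|\pa_x w_j\|_{L^1}+C$, provided we can control the terms involving $\pa_x\rho$ and $\pa_x c_{\rm b}$. The nonlinearities $\overline R(\rho)$ and $\underline R(c_{\rm b})$ produce factors $|\pa_x\rho|$ and $|\pa_x c_{\rm b}|$ multiplied by bounded $w_j$, which are integrable in $L^1(0,T;L^1)$ by Step 2. Summing over $i$ and applying Gronwall gives $w_i^\eps\in L^\infty(0,T;BV)$ uniformly. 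Then $\rho_i^\eps=w_i^\eps\rho^\eps$ is bounded in $L^\infty(0,T;BV)$, because $\rho^\eps\in L^\infty(0,T;W^{1,1})$ from the $H^1$ control; for this we need $\rho^\eps\in L^\infty(H^1)$, which follows from testing with $\pa_t(\rho^\eps)$. Finally, $\pa_t\rho_i^\eps$ is bounded in $L^2(0,T;(H^2)')$ directly from the weak form, since $\rho_i^\eps\pa_x\rho^\eps$ is bounded in $L^2$.

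\textbf{Step 3: passage to the limit.} Aubin--Lions--Simon, applied with $BV\subset\subset L^1$ and $(H^2)'$, gives strong convergence of $\rho_i^\eps$ in $L^2(L^2)$. For $\rho^\eps$ we use the $H^1$ bound with $\pa_t\rho^\eps$. The flux term needs $\rho_i^\eps\pa_x\rho^\eps\rightharpoonup\rho_i\pa_x\rho$, which holds as strong $\times$ weak convergence. The reaction terms converge by continuity of the $R$'s together with strong convergence of $\rho^\eps$ and $c_{\rm b}^\eps$. The $\eps$-viscous terms vanish because $\eps\,\pa_x\rho_i^\eps$ tends to $0$ in the distributional sense against $H^1$ test functions, using the BV bound.

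\textbf{Main obstacle.} The delicate point is the BV estimate for $w_i^\eps$ when $\rho^0$ may vanish. The coefficient $2\eps\,\pa_x\rho/\rho$ is singular where $\rho$ is small, and the lower bound $\rho^\eps\ge \eps e^{-Ct}$ degenerates. I would first try to absorb this term by noting that it is again a pure transport velocity, which the $L^1$ argument tolerates regardless of its size, provided the boundary terms vanish. I would then verify carefully that the reaction contribution, divided by $\rho$, is exactly linear in $w$ with no remaining $1/\rho$ factor.
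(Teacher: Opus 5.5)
\textbf{The gap: the uniform gradient bound on the total density.} Your overall architecture matches the paper's: vanishing viscosity, the fractions $w_i^\eps=\rho_i^\eps/\rho^\eps$, an $L^1$ bound on $\pa_x w_i^\eps$ via a regularized sign, Gronwall, then $\rho_i^\eps=w_i^\eps\rho^\eps$ and Aubin--Lions. Your handling of the transport terms is also correct, including the singular coefficient $2\eps\,\pa_x\rho^\eps/\rho^\eps$: after differentiation they are conservative in $\pa_x w_i^\eps$, and the boundary flux vanishes because $\pa_x\rho^\eps=0$ at $x=0,1$. So your ``main obstacle'' is not the real one.

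The real obstacle sits in Step~2. You need a bound on $\pa_x\rho^\eps$ in $L^\infty(0,T;L^2(\Omega))$ that is uniform in $\eps$, even though $\rho^0$ may vanish. Everything depends on it:
\begin{itemize}
\item the Gronwall argument needs $\pa_x\rho^\eps\in L^1(0,T;L^1)$, because the reactions produce $\overline{R}'(\rho^\eps)\,\pa_x\rho^\eps$;
\item writing $\pa_x\rho_i^\eps=w_i^\eps\pa_x\rho^\eps+\rho^\eps\pa_x w_i^\eps$ needs $\pa_x\rho^\eps\in L^\infty(0,T;L^1)$;
\item the limit of $\rho_i^\eps\pa_x\rho^\eps$, and the claim $\rho\in L^2(0,T;H^1)$, need $\pa_x\rho^\eps$ bounded in $L^2$.
\end{itemize}

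Your proposed tests do not give this bound. Testing with $\rho^\eps$ only controls $\int\!\!\int(\rho^\eps+\eps)|\pa_x\rho^\eps|^2$, which degenerates where $\rho^\eps$ is small. Testing with $\pa_t\rho^\eps$ gives
\[
\|\pa_t\rho^\eps\|_{L^2}^2+\frac{\dd}{\dd t}\,\frac12\int_\Omega(\rho^\eps+\eps)|\pa_x\rho^\eps|^2\,\dd x=\frac12\int_\Omega\pa_t\rho^\eps\,|\pa_x\rho^\eps|^2\,\dd x+\int_\Omega f\,\pa_t\rho^\eps\,\dd x .
\]
Here the first right-hand term is quartic in the gradient, and the controlled quantity is again the degenerate weighted one. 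Testing with $\pa_t$ of $(\rho^\eps+\eps)^2/2$ closes, but it only controls $(\rho^\eps+\eps)\pa_x\rho^\eps$.

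\textbf{The missing idea.} The paper uses a one-dimensional computation: differentiate the $\rho^\eps$-equation in $x$ and multiply by $\pa_x\rho^\eps$, i.e.\ test with $-\pa_x^2\rho^\eps$. Three things then happen.
\begin{itemize}
\item The cubic term $\int_\Omega(\pa_x\rho^\eps)^2\pa_x^2\rho^\eps\,\dd x=\frac13[(\pa_x\rho^\eps)^3]_0^1$ vanishes by the Neumann condition.
\item The diffusion leaves the good term $\int_\Omega(\rho^\eps+\eps)|\pa_x^2\rho^\eps|^2\,\dd x$.
\item Since $f$ is a combination of the $\rho_j^\eps$ with bounded coefficients, $|f|\le C\rho^\eps$, so $\int_\Omega|f\,\pa_x^2\rho^\eps|\,\dd x\le\frac12\int_\Omega\rho^\eps|\pa_x^2\rho^\eps|^2\,\dd x+\frac{C^2}{2}\int_\Omega\rho^\eps\,\dd x$.
\end{itemize}
This yields $\sup_t\|\pa_x\rho^\eps(t)\|_{L^2}^2\le\|\pa_x\rho^{0,\eps}\|_{L^2}^2+C'T$. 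This is exactly where $\rho^0\in H^1$ is used, and it also gives $\int\!\!\int\rho^\eps|\pa_x^2\rho^\eps|^2\le C$.

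\textbf{Two smaller points.}
\begin{itemize}
\item Your Gronwall bound is uniform only if $TV(w_i^{0,\eps})$ is bounded uniformly in $\eps$. A generic ``smooth, $\ge\eps$'' regularization does not guarantee this when $\rho^0$ oscillates near $0$. For example, take $\rho^0(x)=x^2\sin^2(1/x)$, $w_{\rm s}^0\equiv1$, and $\rho_i^{0,\eps}=\rho_i^0+\eps/4$; then $TV(w_{\rm s}^{0,\eps})\gtrsim\eps^{-1/2}$. Choosing $\rho_i^{0,\eps}=w_i^0(\rho^0+\eps)$ instead gives $w_i^{0,\eps}=w_i^0$ exactly.
\item The BV bound disposes of $\eps\int\!\!\int\pa_x\rho_i^\eps\,\pa_x\varphi$ only when $\pa_x\varphi$ is bounded, i.e.\ for $\varphi\in L^2(0,T;H^2)$. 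For $H^1$ test functions you can argue by density. Alternatively, note that $\sqrt\eps\,\pa_x\rho_i^\eps$ is bounded in $L^2$: test the $\rho_i^\eps$-equation with $\rho_i^\eps$ and write the cross term as $\frac12\int_\Omega(\rho_i^\eps)^2\pa_x^2\rho^\eps\,\dd x$, which the estimate above controls.
\end{itemize}
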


The proof of this result relies on a vanishing viscosity method. We will first study, in Section~\ref{subsec : regul}, the existence of (weak) solutions to a regularized system associated to \eqref{eq : rhosc}--\eqref{def : fgc} thanks to a semi-discrete in time scheme. Then, in Section~\ref{sec : exi proof}, we will establish uniform, with respect to the viscosity parameter, estimates. Finally, we will show that these estimates are sufficient to obtain compactness properties and to pass to the limit on this parameter. This will lead to the existence of at least one weak solution to \eqref{eq : rhosc}--\eqref{def : fgc}.


\section{Study of a regularized system}\label{subsec : regul}

As already explained, the proof of Theorem~\ref{thm1} relies on a vanishing viscosity method. Therefore, we first introduce a regularized system associated to \eqref{eq : rhosc}--\eqref{def : fgc}. Then, let $\eps >0$, this system writes:
\begin{align}\label{eq : rhosc eps}
\pa_t \rhose - \eps \pa_x^2 \rhose - \pa_x(\rhosc^{\eps} \, \pa_x \rho^\eps) &= \fsc\left(x,\rho^\eps-\eps,\rhose, c^\eps_{\rm b}\right), \,\,\,\,\qquad \mbox{in }\Omega \times (0,T),\\
\pa_t \rhope - \eps \pa_x^2 \rhope - \pa_x(\rhope \,\pa_x \rho^\eps) &= \fpc\left(x,\rho^\eps-\eps,\rhope,\rhose, c^\eps_{\rm b}\right), \quad \mbox{in }\Omega \times (0,T),\label{eq : rhopc eps}\\
\pa_t \rhoee - \eps \pa_x^2 \rhoee - \pa_x(\rhoee \,\pa_x \rho^\eps) &= \fent\left(x,\rho^\eps-\eps,\rhoee,\rhope, c^\eps_{\rm b}\right), \quad \mbox{in }\Omega \times (0,T),\label{eq : rhoent eps}\\
\pa_t \rhoge - \eps \pa_x^2 \rhoge - \pa_x(\rhoge \, \pa_x \rho^\eps) &= \fsc\left(x,\rho^\eps-\eps,\rhoge,\rhope, c^\eps_{\rm b}\right), \quad \mbox{in }\Omega \times (0,T),\label{eq : rhogc eps}\\
\pa_t c^\eps_{\rm b} - \sigma_{\rm b} \pa_x^2 c^\eps_{\rm b} &= \gamma \dfrac{c^\eps_{\rm b}+c_{\rm b}^d}{1+c^\eps_{\rm b}+c_{\rm b}^d} \, (\rhoee + \rhoge), \,\,\quad \mbox{in } \Omega \times (0,T),
\end{align}
with $\rho^\eps := \sum_{i \in \T} \rho_i^\eps$. We complement this system with the following boundary and initial conditions:
\begin{align}
\pa_x \rho^\eps_i(0,t) &= \pa_x \rho^\eps_i(1,t) = 0 \quad \mbox{for } t\in (0,T) \mbox{ and } i \in \T,\label{boundary rho eps}\\
\pa_x c^\eps_{\rm b}(0,t) &= 0, \quad c^\eps_{\rm b}(1,t) = 0 \quad \mbox{for } t \in (0,T),\label{boundary c eps}\\
\rho^\eps_i(x,0) &= \rho^{\eps,0}_i(x) \quad \mbox{for } x \in \Omega \mbox{ and }  i \in \T,\\
c^\eps_{\rm b}(x,0) &= c_{\rm b}^0(x) \quad \mbox{for } x \in \Omega,\label{def : IC cbeps}
\end{align}
where we define 
\[
\rho_i^{\eps,0}(x)=\rho_i^0(x)+\frac\eps4,\text{ for all }x\in\Omega\text{ and } i\in \T,
\]
so that, $\rho^{\eps,0}(x)=\rho^{0}(x)+\,\eps$. Finally, in~\eqref{eq : rhosc eps}--\eqref{eq : rhogc eps} the definitions of the source terms are given by~\eqref{def : fsc}--\eqref{def : fgc}. The purpose of this regularization is to obtain a nondegenerate system.

\begin{proposition}\label{prop : semi discrete}
Let assumptions \emph{{\bf (H1)}--{\bf (H4)}} hold and assume that $0<\eps<1$. Then, for all $i \in \T$, there exist nonnegative functions $\rho_i^\eps \in L^2(0,T;H^1(\Omega))$ with $\pa_t \rho_i^\eps \in L^2(0,T;(H^1)'(\Omega))$ and $c_{\rm b}^\eps \in L^2(0,T;H)$ with $\pa_t c_{\rm b}^\eps \in L^2(0,T;H')$ where we recall that $H'$ denote the dual space of $H$, see~\eqref{def : H}. Moreover, for any $\varphi \in L^2(0,T;H^1(\Omega))$ and $i \in \T$, it holds
\begin{multline}\label{weak eq rhoi eps}
\int_{\Omega \times (0,T)} \pa_t \rho^\eps_i \, \varphi \, \dd x \dd t + \eps \int_{\Omega \times (0,T)} \pa_x \rho_i^\eps \, \pa_x \varphi \, \dd x\dd t + \int_{\Omega \times (0,T)} \rho^\eps_i \, \pa_x \rho^\eps \, \pa_x \varphi \, \dd x \dd t \\ 
= \int_{\Omega \times (0,T)} f_i(x,\rho^\eps-\eps,(\rho^\eps_j)_{j \in \mathcal{E}_i},c^\eps_{\rm b}) \, \varphi \, \dd x \dd t,
\end{multline}
and for all $\psi \in L^2(0,T;H)$
\begin{multline}\label{weak eq cb eps}
\int_{\Omega \times (0,T)} \pa_t c^\eps_{\rm b} \, \psi \, \dd x \dd t +\sigma_{\rm b} \int_{\Omega \times (0,T)} \pa_x c^\eps_{\rm b} \, \pa_x \psi \, \dd x \dd t\\= \gamma\int_{\Omega \times (0,T)} \dfrac{c^\eps_{\rm b}+c_{\rm b}^d}{1+c^\eps_{\rm b}+c_{\rm b}^d} (\rhoee+\rhoge) \, \psi \, \dd x \dd t.
\end{multline}
\end{proposition}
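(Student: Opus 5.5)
We prove Proposition~\ref{prop : semi discrete} by a semi-discrete in time (implicit Euler) scheme combined with a fixed point argument, followed by uniform-in-$\tau$ estimates and a limit $\tau\to0$.

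\textbf{Step 1: the time-discrete problem.} Let $N\in\N$, $\tau=T/N$, and start from $(\rho_i^{0}+\eps/4)_{i\in\T}$ and $c_{\rm b}^0$. Given the previous step $(\rho_i^{k-1},c_{\rm b}^{k-1})$, we look for $(\rho_i^k)\subset H^1(\Omega)$ and $c_{\rm b}^k\in H$ solving the implicit weak formulation of \eqref{eq : rhosc eps}--\eqref{def : IC cbeps} with $\pa_t$ replaced by the difference quotient.

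To linearize, we truncate. Set $\rho_i^+=\max(\rho_i,0)$ in the transport coefficient and in the reactions, and cap $\pa_x\rho$ in the convection term only during the fixed point. We then freeze $(\bar\rho_i,\bar c)\in L^2(\Omega)^5$ in all nonlinear coefficients and in $\pa_x\bar\rho$. Each equation becomes linear, coercive and elliptic thanks to $\eps\pa_x^2$ and $\sigma_{\rm b}\pa_x^2$, so Lax--Milgram gives a unique solution. The source terms are Lipschitz and bounded in $\bar\rho$ by \textbf{(H4)}, since the $R$'s are $W^{1,\infty}$ with values in $[0,1]$.

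The map $(\bar\rho,\bar c)\mapsto(\rho,c)$ sends $L^2$ into a bounded set of $H^1$, which is compactly embedded. The Leray--Schauder theorem therefore applies once we have a priori bounds uniform in the homotopy parameter. These come from testing with $\rho_i$ and $c_{\rm b}$, using the truncations and $\tau$ small, since all reactions grow at most linearly.

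\textbf{Step 2: nonnegativity and removal of truncations.} Nonnegativity of $\rho_i^k$ follows by testing with $\rho_i^{k,-}$. Every negative contribution in $f_i$ is proportional to $\rho_i$ itself, and the positive cross terms, such as $\rhosc q_{\rm s,p}R_{\rm s,p}$, are nonnegative. Nonnegativity of $c_{\rm b}^k$ follows in the same way, because its source is nonnegative.

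Summing the equations gives a closed equation for $\rho^k$:
\[
\frac{\rho^k-\rho^{k-1}}\tau-\eps\pa_x^2\rho^k-\pa_x(\rho^k\pa_x\rho^k)=f.
\]
We test it with $\rho^k$, and with $-\pa_x^2\rho^k$ or $\Phi(\rho^k)$ with $\Phi'(s)=s+\eps$. Using $\rho^0\in H^1$ and $|f|\le C\rho$, this yields the bound
\[
\max_k\|\rho^k\|_{H^1}^2+\sum_k\tau\|(\eps+\rho^k)\pa_x^2\rho^k\|^2\le C(\eps,T).
\]
This controls $\pa_x\rho^k$ in $L^\infty$ in 1D, so we can choose the cap on $\pa_x\rho$ larger than this bound and remove it.

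\textbf{Step 3: the limit $\tau\to0$.} Define piecewise constant and piecewise linear interpolants. Summing the discrete energy estimates gives, uniformly in $\tau$:
\begin{itemize}
\item $\rho_i^\tau$ bounded in $L^2(H^1)\cap L^\infty(L^2)$;
\item $\rho^\tau$ bounded in $L^\infty(H^1)$;
\item $c^\tau$ bounded in $L^2(H)$;
\item the discrete time derivatives bounded in $L^2((H^1)')$ and $L^2(H')$.
\end{itemize}
The first and last bounds follow from testing each equation with $\rho_i^k$, where the term $\rho_i\pa_x\rho\,\pa_x\rho_i$ is controlled by $\|\pa_x\rho\|_\infty$ from Step 2.

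By the Aubin--Lions lemma in its discrete form (Dreher--Jüngel), we get strong $L^2$ convergence of $\rho_i^\tau$ and $c^\tau$, weak $L^2(H^1)$ convergence, and strong convergence of $\pa_x\rho^\tau$ in $L^2$. The last one follows from compactness of $\rho^\tau$ in $L^2(H^1)$, obtained from the $H^2$-type bound in Step 2. Hence $\rho_i^\tau\pa_x\rho^\tau$ converges weakly, and the continuous Lipschitz sources converge. This yields \eqref{weak eq rhoi eps}--\eqref{weak eq cb eps}.

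\textbf{Main obstacle.} The main difficulty is the cross-diffusion term $\rho_i\pa_x\rho$. It requires a gradient bound on $\rho$ independent of $\tau$ (Step 2), obtained from the closed scalar equation for $\rho$. If the $L^\infty$ bound on $\pa_x\rho$ proves delicate, I would instead keep the truncation $T_M(\pa_x\rho)$ and use the $L^\infty(H^1)$ bound of $\rho$ together with a Gagliardo--Nirenberg interpolation to pass $M\to\infty$.
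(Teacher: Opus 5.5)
\textbf{Gap in Step~1: the fixed-point map is not defined or not compact.} The rest of your plan is a workable alternative to the paper's: the fully implicit scheme, nonnegativity by testing with negative parts, the $H^1$/$H^2$ bound on $\rho$ from the summed equation (not really closed, since $f$ depends on the individual $\rho_i$, but $|f|\le q_\infty\rho$ is all you use), and discrete Aubin--Lions. The failure is in constructing the discrete solution. You freeze $(\bar\rho_i,\bar c)\in L^2(\Omega)^5$ and put the capped gradient $T_M(\pa_x\bar\rho)$ into the convection term. For $\bar\rho\in L^2(\Omega)$, $\pa_x\bar\rho$ is only a distribution, so neither $T_M(\pa_x\bar\rho)$ nor the map $(\bar\rho,\bar c)\mapsto(\rho,c)$ is defined on $L^2$.

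If you move to $H^1(\Omega)^5$ so that $\pa_x\bar\rho\in L^2$, you lose compactness. The frozen equation only tells you that the flux, $\eps\pa_x\rho_i$ plus the frozen convective term containing $T_M(\pa_x\bar\rho)$, lies in $H^1$. So $\pa_x\rho_i$ is no more regular than $T_M(\pa_x\bar\rho)$, and the map is bounded but not compact on $H^1$. Schaefer/Leray--Schauder therefore does not apply. The cap does not cure this, and it adds a circularity. Absorbing the capped drift by Young's inequality with ``$\tau$ small'', as you propose for the homotopy bound, forces $\tau M^2\lesssim\eps$. Removing the cap needs $M\ge\max_k\|\pa_x\rho^k\|_{L^\infty}$, which your Step~2 estimate only bounds by $C(\eps)(1+\tau^{-1/2})$. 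The two requirements are incompatible.

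\textbf{The missing idea and a repair.} The gradient in $\rho_i\pa_x\rho$ must never be frozen; it must be the unknown of the scalar total-density equation, after which the $\rho_i$-equations are linear with a known drift. This is what the paper's scheme is built on. It first solves for $\rho^{n+1}$ with the explicit coefficient $\rho^n$, so $-\pa_x\big((\eps+\rho^n)\pa_x\rho^{n+1}\big)$ is linear and Schaefer is needed only for the zeroth-order source on $L^2$. The maximum principle then gives $\eps\le\rho^{n+1}\le M^\eps_\infty$. With $\pa_x\rho^{n+1}\in L^2$ known, it solves for each $\rho_i^{n+1}$, freezing only the bounded factor $T^{M^\eps_\infty}(v_i)$, which is well defined and compact on $L^2$.

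You can keep your fully implicit scheme by using this triangular structure inside one step. For frozen $\bar u$ and a bounded truncation $T$, first solve $\rho-\tau\pa_x\big((\eps+\sum_j T(\bar\rho_j^+))\pa_x\rho\big)=\rho^{k-1}+\tau f(\bar u)$. Then solve $\rho_i-\tau\eps\pa_x^2\rho_i-\tau\pa_x\big(T(\bar\rho_i^+)\pa_x\rho\big)=\rho_i^{k-1}+\tau f_i(\bar u)$. Subtracting the summed $\rho_i$-equations from the $\rho$-equation gives $\sum_i\rho_i=\rho$ for every $\bar u$. The map is compact on $L^2$, no cap on $\pa_x\rho$ is needed, and $T$ is removed afterwards via $0\le\rho_i\le\rho$ and your $H^1$ bound on $\rho$.

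Repaired this way, your route has a real advantage: $\rho^k=\sum_i\rho_i^k$ holds exactly at the discrete level. In the paper's decoupled scheme the $\rho_i^{n+1}$ need not sum to the $\rho^{n+1}$ of the total-density equation.

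\textbf{Secondary fix in Step~3.} The $L^\infty$ bound on $\pa_x\rho^k$ from Step~2 is not uniform in $\tau$; you only have $\pa_x\rho^\tau$ bounded in $L^2(0,T;L^\infty(\Omega))$. Control the cross term instead with $\|\rho_i^k\|_{L^\infty}\le\|\rho^k\|_{L^\infty}\lesssim\|\rho^k\|_{H^1}$ together with $\|\pa_x\rho^k\|_{L^2}$, as the paper does with $M^\eps_\infty$.
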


In order to prove this result, we will use a semi-discrete in time scheme. More precisely, for a given positive integer $N_T$, we define a time step $\Delta t := T/N_T$ and a sequence $t_n = n \Delta t$ for any $n=0,\ldots,N_T$. Then, for some given $n \in\{0,\ldots,N_T-1\}$, starting from given nonnegative functions $(\rho^{\eps,n}_i)_{i \in \T}$  and   $c_{\rm b}^{\eps,n}$ such that  $\rho^{\eps,n}_i, c_{\rm b}^{\eps,n}\in L^2(\Omega)$  for all $i \in \T$ with the condition  \begin{equation}\label{star}
    \eps\leq\rho^{\eps,n}\leq M_\infty^\eps\text{ a.e in $\Omega$} \tag{$\star$},
\end{equation}
where $M_\infty^\eps$ is defined as
\begin{align}\label{def : Minfty}
M^\eps_\infty \coloneq M_\infty+\eps= \max\left(\|\rho^{0}\|_{L^\infty(\Omega)}, \, \overline{M}\right)+\eps,
\end{align}
in which the constant $\overline{M}>0$ is introduced in  \emph{{\bf (H4)}}, we first solve:
\begin{multline}\label{eq : rho eps delta}
\rho^{\eps,n+1} - \eps \,\Delta t \, \pa_x^2 \rho^{\eps,n+1} -\Delta t\, \pa_x(\rho^{\eps,n} \, \pa_x \rho^{\eps,n+1}) \\ = \rho^{\eps,n} + \Delta t\, f\left(x,\rho^{\eps,n+1}-\eps,\rhosc^{\eps,n},\rhopc^{\eps,n},\rhoent^{\eps,n},\rhogc^{\eps,n}, c^{\eps,n}_{\rm b}\right), \quad \mbox{in }\Omega,
\end{multline}
where we recall definition~\eqref{def : f} of $f$. Then, for all $i \in \T$, we define the functions $\rho^{\eps,n+1}_i$ as solution to the following system
\begin{multline}\label{eq : rhoi eps delta}
\rho_i^{\eps,n+1} - \eps \, \Delta t\, \pa_x^2 \rho_i^{\eps,n+1} -\Delta t \, \pa_x(\rho_i^{\eps,n+1} \,\pa_x \rho^{\eps,n+1}) \\
= \rho^{\eps,n}_i + \Delta t \, f_i\left(x,\rho^{\eps,n+1}-\eps,(\rho^{\eps,n+1}_j)_{j \in \mathcal{E}_i}, c^{\eps,n}_{\rm b}\right), \quad \mbox{in }\Omega.
\end{multline}
Finally, we define $c_{\rm b}^{\eps,n+1}$ as solution to
\begin{align}\label{eq : cb eps delta}
c^{\eps,n+1}_{\rm b} - \sigma_{\rm b} \, \Delta t \, \pa_x^2 c^{\eps,n+1}_{\rm b} &= c^{\eps,n}_{\rm b}+ \gamma \, \Delta t \, \dfrac{c^{\eps,n}_{\rm b}+c_{\rm b}^d}{1+c^{\eps,n}_{\rm b}+c_{\rm b}^d} \, (\rhoent^{\eps,n+1} + \rhogc^{\eps,n+1}), \quad \mbox{in } \Omega.
\end{align}
Of course, we complement system~\eqref{eq : rho eps delta}--\eqref{eq : cb eps delta} with homogeneous Neumann boundary conditions for $\rho^{\eps,n+1}$ and $\rho^{\eps,n+1}_i$, for all $i \in \T$ as in~\eqref{boundary rho eps}, and mixed boundary conditions for $c_{\rm b}^{\eps,n+1}$ as in~\eqref{boundary c eps}. Throughout this section, to simplify the notation, we will neglect the exponent $\eps$ and simply write, for instance, $\rho^{n+1}$ instead of $\rho^{\eps,n+1}$.

Thanks to the choice of the discretization in time of the equations~\eqref{eq : rho eps delta}--\eqref{eq : cb eps delta}, we notice that these equations are decoupled. In particular, we will first study the equation on the total density~\eqref{eq : rho eps delta}.
\begin{remark}
Of course, instead of considering all four functions $(\rho^{n+1}_i)_{i\in\T}$, 
the analysis can be restricted to the three functions indexed by $i\in \widetilde\T = \T \setminus \{\mathrm{g}\}$. The existence of the remaining component \( \rhogc^{n+1} \) is then obtained by $\rhogc^{n+1}=\rho^{n+1}-\sum_{i\in\widetilde\T}\rho_i^{n+1}$ and the nonnegativity of $\rho^{n+1}_{\rm g}$ is shown as for the other species.
\end{remark}


\subsection{Study of the equation on the total density}

\begin{lemma}\label{lemma 1}
Let the  assumptions \emph{{\bf (H3)}-{\bf (H4)}} hold and assume that $0<\eps<1$. Moreover, for all $ i\in \T$, we assume that $\rho^n_i$ and $c_{\rm b}^n$ are nonnegative and belong to $L^2(\Omega)$ such that $\rho^{n}$ satisfies~\eqref{star}. Then, there exists $\rho^{n+1} \in H^1(\Omega)$ weak solution to~\eqref{eq : rho eps delta} satisfying $\eps \leq ~\rho^{n+1}(x) \leq ~M_\infty^\eps,$ for all $x \in \Omega,$ and  the following estimate 
\begin{equation}\label{coercivity-L2 rho unif delta}
   \| \rho^{n+1}\|^2_{L^2(\Omega)}+2\eps\,\Delta t \,\| \pa_x\rho^{n+1}\|^2_{L^2(\Omega)}\leq \| \rho^n\|^2_{L^2(\Omega)}+8\Delta t\,q_\infty   {(M_\infty^\eps)}^2(1+2q_\infty\,\Delta t),
\end{equation}
where $q_\infty\coloneq\max(q_{{\rm div,s}},\ldots, q_{\rm ex, g})$.
\end{lemma}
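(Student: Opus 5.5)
We will prove Lemma~\ref{lemma 1} with a fixed-point argument (Schauder or Leray--Schauder) applied to a truncated version of \eqref{eq : rho eps delta}. The maximum principle then removes the truncation, and testing the equation with $\rho^{n+1}$ gives the energy estimate. The equation is linear in its principal part. The diffusion coefficient $\eps+\rho^n$ is known, measurable, and satisfies $\eps\le \eps+\rho^n\le \eps+M^\eps_\infty$ by \eqref{star}, so the operator is uniformly elliptic. The only nonlinearity is the source term $f(x,\rho^{n+1}-\eps,\rho^n_{\rm s},\ldots,c_{\rm b}^n)$. By \textbf{(H4)} it is Lipschitz in its second argument. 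For fixed $x$ it is bounded by $q_\infty(\rho^n_{\rm s}+\rho^n_{\rm p}+\rho^n_{\rm e}+\rho^n_{\rm g})\le q_\infty\rho^n\le q_\infty M^\eps_\infty$, which is in $L^\infty$.

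\emph{Existence.} Given $\bar\rho\in L^2(\Omega)$, let $\rho=\Phi(\bar\rho)\in H^1(\Omega)$ be the unique Lax--Milgram solution of
\[
\int_\Omega \rho\,\varphi+\Delta t\int_\Omega(\eps+\rho^n)\pa_x\rho\,\pa_x\varphi=\int_\Omega\big(\rho^n+\Delta t\, f(x,\bar\rho-\eps,\rho^n_{\rm s},\ldots,c^n_{\rm b})\big)\varphi,
\]
for all $\varphi\in H^1(\Omega)$. The right-hand side is bounded in $L^\infty$ independently of $\bar\rho$. Hence $\|\Phi(\bar\rho)\|_{H^1}\le C$, so $\Phi$ maps a ball of $L^2$ into itself, and it is compact because $H^1(\Omega)\hookrightarrow L^2(\Omega)$ compactly. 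Continuity of $\Phi$ follows from continuity of $f$ in its second argument together with dominated convergence. Schauder's theorem then gives a fixed point.

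\emph{Bounds.} This is the main step. For the lower bound, test with $(\eps-\rho^{n+1})^+$. On the set $\{\rho^{n+1}<\eps\}$ we have $\rho^{n+1}-\eps<0$, so all $\overline R(\rho^{n+1}-\eps)=0$ by \textbf{(H4)}. There the extrusion terms vanish and $f\ge0$. Since also $\rho^n\ge\eps$, we get
\[
\int_\Omega(\rho^{n+1}-\eps)(\eps-\rho^{n+1})^+\ge 0,
\]
which forces $(\eps-\rho^{n+1})^+=0$. For the upper bound, test with $(\rho^{n+1}-M^\eps_\infty)^+$. On the set $\{\rho^{n+1}>M^\eps_\infty\}$ we have $\rho^{n+1}-\eps>M_\infty\ge\overline M$, so $\overline R_{\rm div,s}=\overline R_{\rm div,p}=1$. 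The division terms then vanish and $f\le0$. Combined with $\rho^n\le M^\eps_\infty$, this gives $(\rho^{n+1}-M^\eps_\infty)^+=0$. Taking $M_\infty\ge\overline M$ in \eqref{def : Minfty} is exactly what makes this step work. The sign analysis of $f$ under \textbf{(H4)} is where care is needed. In particular the nonnegativity of $\rho^n_i$ enters here, and we must use the lower bound on $\rho^n$ in the diffusion coefficient rather than on $\rho^{n+1}$.

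\emph{Estimate \eqref{coercivity-L2 rho unif delta}.} Test with $\varphi=\rho^{n+1}$ and use $\rho^n\ge0$ to drop $\Delta t\int\rho^n|\pa_x\rho^{n+1}|^2$. This gives
\[
\|\rho^{n+1}\|^2+\eps\Delta t\|\pa_x\rho^{n+1}\|^2\cdot(\text{const})\le \int\rho^n\rho^{n+1}+\Delta t\int|f|\rho^{n+1}.
\]
Apply Young's inequality in the form $ab\le a^2/2+b^2/2$ and multiply by $2$. Then bound $|f|\le 2q_\infty M^\eps_\infty$ and $\rho^{n+1}\le M^\eps_\infty$, using $|\Omega|=1$. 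The remaining terms are estimated crudely, for instance $\Delta t\int|f|\,|\rho^{n+1}-\rho^n|$, with $\|\rho^{n+1}-\rho^n\|$ controlled through the equation, or simply by the $L^\infty$ bounds. This produces a right-hand side of the form $8\Delta t\,q_\infty(M^\eps_\infty)^2(1+2q_\infty\Delta t)$. We expect that matching the exact constants will only require bookkeeping.
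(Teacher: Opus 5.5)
Your proof is correct and follows essentially the paper's route. You freeze the argument of $f$, solve the linear problem by Lax--Milgram and take a fixed point (Schauder on an $L^2$-ball where the paper uses Schaefer; both rest on the $\bar\rho$-independent $L^\infty$ bound on the source), then get $\eps\le\rho^{n+1}\le M^\eps_\infty$ with truncation test functions as the paper does. For the energy estimate you do not need any extra control of $\rho^{n+1}-\rho^n$: Young plus $\rho^{n+1}\le M^\eps_\infty$ already give $\|\rho^{n+1}\|^2_{L^2(\Omega)}+2\eps\Delta t\|\pa_x\rho^{n+1}\|^2_{L^2(\Omega)}\le\|\rho^n\|^2_{L^2(\Omega)}+2\Delta t\,q_\infty(M^\eps_\infty)^2$, which is below the stated bound (the paper instead applies Young to $\int_\Omega(\rho^n+\Delta t\,f)\rho\,\dd x$ at the linearized level, which produces exactly the constant in the statement).
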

\begin{proof}
We first linearized~\eqref{eq : rho eps delta}. Indeed, for a given $v\in L^2(\Omega)$, we look for $\rho\in H^1(\Omega)$ such that  \begin{equation} \label{weak formulation 1}
    a(\rho,\varphi)=L(\varphi),\quad \forall\varphi\in H^1(\Omega),
\end{equation}
where $a(\cdot,\cdot)$ is a bilinear form defined on $H^1(\Omega)\times H^1(\Omega)$ as follows:
\begin{equation*}
a(\rho,\varphi)=\int_{\Omega} \rho \, \varphi\,\dd x+\eps \,\Delta t\,\int_{\Omega} \pa_x\rho \, \pa_x\varphi\,\dd x+\Delta t \int_{\Omega} \rho^n \, \pa_x\rho\,\pa_x  \, \varphi\,\dd x,
 \end{equation*}
and the linear form $L(\cdot)$ defined on $H^1(\Omega)$ as:
\begin{equation*}
L(\varphi)=\int_{\Omega} \rho^n\varphi\,\dd x+\Delta t \int_{\Omega} f\left(x,v-\eps,\rhosc^{n},\rhopc^{n},\rhoent^{n},\rhogc^{n}, c^{n}_{\rm b}\right)\varphi\,\dd x.
\end{equation*}
Then, we split the proof in three steps:\medskip

\noindent\textit{\textbf{Step 1. Well posedness of \eqref{weak formulation 1}.}}
It is clear that $L(.)$ and $a(.,.)$ are respectively continuous in $H^1(\Omega)$ and $(H^1(\Omega))^2$ by using  \eqref{star} and the assumtions \emph{{\bf (H4)}}.  Moreover, for all $\varphi\in H^1(\Omega)$ we have 
\begin{align*}
a(\varphi,\varphi)&=\int_{\Omega} \varphi^2\ \dd x +\Delta t\int_{\Omega}(\eps+\rho^n)\,|\pa_x\varphi|^2\dd x\geq\min(1,\eps\,\Delta t)\|\varphi\|^2_{H^1(\Omega)}.
\end{align*}
Therefore, $a(\cdot,\cdot)$ is coercive and by Lax-Milgram theorem there exists a unique solution $\rho\in~H^1(\Omega)$ to \eqref{weak formulation 1}.
Moreover, using $\rho$ as a test function in \eqref{weak formulation 1} and the formula $ab\leq\frac{a^2}{2}+\frac{b^2}{2}$, we obtain
\begin{equation*}
    \| \rho\|^2_{L^2(\Omega)}+\eps\,\Delta t\, \| \pa_x\rho\|^2_{L^2(\Omega)}+\Delta t \int_\Omega\rho^n|\pa_x\rho|^2\dd x \leq \frac12\int_\Omega\left(\rho^n+\Delta t\,f\right)^2\dd x+\frac12 \| \rho\|^2_{L^2(\Omega)}.
\end{equation*}
Recalling the expression \eqref{def : f} of $f$  and using \eqref{star} together with \emph{{\bf (H4)}} we get 
\begin{equation}
\|f\|_{L^\infty(\Omega)}\leq 4 M_\infty^\eps q_\infty.\label{star star} \tag{$\star\star$}
\end{equation} 
Then, we obtain \eqref{coercivity-L2 rho unif delta} with $\rho$ instead of $\rho^{n+1}$.

\noindent\textit{\textbf{Step 2. Schaefer's fixed point theorem.}}
In order to apply Schaefer's fixed point theorem \cite[Theorem 4, Chapter 9]{MR2597943}, we introduce the following map : 
\begin{align*}
 \mathcal{F} :  L^2(\Omega) &\longrightarrow L^2(\Omega),\\ 
 v &\longmapsto \rho,
\end{align*}
where $\rho=\mathcal{F}(v)\in H^1(\Omega)$ is the unique solution of the weak formulation \eqref{weak formulation 1}.\\
First to prove the continuity of $\mathcal{F}$, we consider, for $k\in\N$, the sequence $(v_k)_k\in L^2(\Omega)$ that converges strongly toward $v$ in $ L^2(\Omega)$. Thanks to~\eqref{coercivity-L2 rho unif delta} (with $\rho_k$ instead of $\rho^{n+1}$), we deduce that $\rho_k=\mathcal{F}(v_k)$ is bounded in $H^1(\Omega)$. Then, up to a subsequence, as $k\to+\infty$, we have that  
\begin{align*}
    \pa_x\rho_k&\rightharpoonup \pa_x\rho\quad\text{ weakly in } H^1(\Omega),\\
    \rho_k &\longrightarrow \rho \quad\text{  strongly in } L^2(\Omega).
\end{align*}
Moreover, thanks to the assumption  \emph{{\bf (H4)}}, the function $f$ is continuous w.r.t.~each component. Therefore, taking the limit $k\to+\infty,$  and using Lebesgue dominated convergence theorem, we get $a(\rho,\varphi)=L(\varphi)$, for all $\varphi~\in~H^1(\Omega)$ and $\mathcal{F}$ is continuous.
Furthermore, since for a bounded sequence $(v_k)_k$ in $L^2(\Omega)$, $\mathcal{F}(v_k)=\rho_k$ converges, up to a subsequence, strongly in $L^2(\Omega)$. Then, $\mathcal{F}$ is compact.
Finally, we consider $v\in L^2(\Omega)$ as a solution to $$v=\lambda\mathcal{F}(v)\qquad \text{ for } \lambda\in [0,1].$$
Using (\ref{coercivity-L2 rho unif delta}) we have that \begin{align*}
    \|\mathcal{F}(v)\|_{H^1(\Omega)}\leq C.
\end{align*}
Therefore , $v=\lambda\mathcal{F}(v)$ is bounded in $H^{1}(\Omega)$ which yields the existence of a fixed point  denoted $\rho^{n+1}$ of $\mathcal{F}$.
Hence, $\rho^{n+1}$ is a  weak solution to \eqref{eq : rho eps delta}, i.e.~for all $\varphi\in H^1(\Omega),$ it holds 
\begin{multline} \label{weak rho}
    \int_{\Omega} \rho^{n+1}\varphi\,\dd x+\eps\,\Delta t \int_{\Omega} \pa_x\rho^{n+1}\pa_x\varphi\,\dd x+\Delta t \int_{\Omega} \rho^n \pa_x\rho^{n+1}\pa_x\varphi\,\dd x\\=\int_{\Omega} \rho^n\varphi\,\dd x+\Delta t \int_{\Omega} f\left(x,\rho^{n+1}-\eps,\rhosc^{n},\rhopc^{n},\rhoent^{n},\rhogc^{n}, c^n_{\rm b}\right)\varphi\,\dd x.
\end{multline}

\noindent\textit{\textbf{Step 3. Positivity and Boundedness of $\rho^{n+1}$.}}
Using $[\rho^{n+1}-\eps]_-$ as a test function in \eqref{weak rho}, where $[x]_-=\min(x,0)$, we have 
\begin{multline*}
    \int_{\Omega} \left(\rho^{n+1}-\rho^{n}\right)[\rho^{n+1}-\eps]_-\dd x+\Delta t \int_{\Omega} (\eps+\rho^n) |\pa_x[\rho^{n+1}-\eps]_-|^2\dd x\\=\Delta t \int_{\Omega}f\left(x,\rho^{n+1}-\eps,\rhosc^{n},\rhopc^{n},\rhoent^{n},\rhogc^{n}, c^{n}_{\rm b}\right)[\rho^{n+1}-\eps]_-\dd x.
\end{multline*}
Thanks to the assumptions \emph{{\bf (H3)--\bf (H4)}}, the definition ~\eqref{def : f} of the source term and the nonnegativity property  of $(\rho_i^{n})_{i\in\T}$, we deduce that the second term in the left hand side~of the above equation is nonnegative as well as the term in the right hand side. Then, we get
\begin{equation*}
    \int_{\Omega} \left((\rho^{n+1}-\eps)-(\rho^n-\eps)\right)[\rho^{n+1}-\eps]_-\dd x\leq0.
\end{equation*}
Since $\rho^n\geq\eps$ for a.e.~in $\Omega$, we obtain 
\begin{align*}
     \int_{\Omega} |[\rho^{n+1}-\eps]_-|^2\dd x\leq 0,
\end{align*}
so that $\rho^{n+1}\geq\eps$ for a.e. in $\Omega$.

Now, in order to prove the boundedness of $\rho^{n+1}$, we use  $[\rho^{n+1}-M_\infty^\eps]_+ $ as a test function in~\eqref{weak rho}, with $[x]_+\coloneq\max(x,0)$, and where we recall definition~\eqref{def : Minfty} of $M_\infty^\eps$. Then, we have 
\begin{multline*}
    \int_{\Omega}\left( \rho^{n+1}-\rho^n\right)[\rho^{n+1}-M_\infty^\eps]_+\dd x+\Delta t \int_{\Omega} (\eps+\rho^n) |\pa_x[\rho^{n+1}-M_\infty^\eps]_+|^2\dd x\\=\Delta t \int_{\Omega}f\left(x,\rho^{n+1}-\eps,\rhosc^{n},\rhopc^{n},\rhoent^{n},\rhogc^{n}, c^{n}_{\rm b}\right)[\rho^{n+1}-M_\infty^\eps]_+\dd x.
\end{multline*}
Using assumption \emph{\bf(H4)} and the nonnegativity of $(\rho_i^{n})_{i\in\T}$, we obtain  
\begin{align*}
\int_{\Omega} |[\rho^{n+1}-M^\eps_\infty]_+|^2 \, \dd x\leq 0.
\end{align*}
Therefore, we deduce that $\eps\leq\rho^{n+1}\leq M^\eps_\infty$. This concludes the proof of Lemma \ref{lemma 1}.
\end{proof}


\subsection{Study of the equations on the partial densities}

\begin{lemma}\label{lemma 2}
Let the assumptions of Lemma \ref{lemma 1} hold. Then, under the condition $q_\infty\,\Delta t<1,$ there exist nonnegative functions $\rhosc^{n+1}$, $\rhopc^{n+1}$, $\rhoent^{n+1}$ and $\rhogc^{n+1}$ belonging to $H^1(\Omega)$, weak solutions to~\eqref{eq : rhoi eps delta} satisfying, for all $i\in\T$,
\begin{multline}
\label{L2 rhoi unif deltat}
   \| \rho_i^{n+1}\|^2_{L^2(\Omega)}+\eps\, \Delta t\, \| \pa_x\rho_i^{n+1}\|^2_{L^2(\Omega)}\leq \| \rho_i^n\|^2_{L^2(\Omega)}\\+\frac{\Delta t}\eps  (M_\infty^\eps)^2\,\|\pa_x\rho^{n+1}\|^2_{L^2(\Omega)}+8q_\infty\,\Delta t \, {(M_\infty^\eps)}^2(1+2 q_\infty\,\Delta t ).
\end{multline}
\end{lemma}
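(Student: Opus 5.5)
Since $\rho^{n+1}\in H^1(\Omega)$ with $\eps\le\rho^{n+1}\le M^\eps_\infty$ is already given by Lemma~\ref{lemma 1}, each equation in~\eqref{eq : rhoi eps delta} is a \emph{linear} elliptic problem in $\rho_i^{n+1}$. The only exception is the source $f_i$, which depends on $\rho_i^{n+1}$ and, for ${\rm p},{\rm e},{\rm g}$, on $\rho_{\rm p}^{n+1}$ or $\rho_{\rm s}^{n+1}$. I will therefore solve the equations sequentially in the order ${\rm s}\to{\rm p}\to\{{\rm e},{\rm g}\}$, so that for each $i$ the cross term involves an already-constructed function. For fixed $i$, $f_i$ is affine in $\rho_i^{n+1}$, namely $f_i=\rho_i^{n+1}\,a_i(x)+b_i(x)$. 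Here $a_i$ is a bounded coefficient (depending on $\rho^{n+1}-\eps$ and $c_{\rm b}^n$) with $|a_i|\le 2q_\infty$ or so, and $b_i\ge0$ is the cross-reaction term, bounded by $q_\infty$ times an already-known $\rho_j^{n+1}$. Alternatively, to avoid relying on a precise linearity structure, I would linearize by freezing $\rho_i^{n+1}$ inside $f_i$ as $v\in L^2$ and apply Schaefer's theorem exactly as in Step~2 of Lemma~\ref{lemma 1}. To make $f_i$ bounded before positivity is known, I would truncate $\rho_i$ in the source, e.g.\ $\min([\rho_i]_+,M^\eps_\infty)$, and remove the truncation at the end.

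For the linear problem I use the bilinear form
\[
a_i(u,\varphi)=\int_\Omega u\varphi\,\dd x+\eps\Delta t\int_\Omega \pa_x u\,\pa_x\varphi\,\dd x+\Delta t\int_\Omega u\,\pa_x\rho^{n+1}\,\pa_x\varphi\,\dd x-\Delta t\int_\Omega a_i u\varphi\,\dd x .
\]
The third term is not obviously controlled, since $\pa_x\rho^{n+1}$ is only in $L^2$. In one dimension, however, $H^1(\Omega)\hookrightarrow L^\infty(\Omega)$, so the form is continuous. For coercivity I estimate
\[
\Delta t\Big|\int_\Omega u\,\pa_x\rho^{n+1}\pa_x u\,\dd x\Big|\le \frac{\eps\Delta t}{2}\|\pa_x u\|_{L^2}^2+\frac{\Delta t}{2\eps}\|u\,\pa_x\rho^{n+1}\|_{L^2}^2 .
\]
This is the main obstacle, because $\|u\pa_x\rho^{n+1}\|_{L^2}\le\|u\|_{L^\infty}\|\pa_x\rho^{n+1}\|_{L^2}$ and $\|u\|_{L^\infty}$ is not controlled by $\|u\|_{L^2}$ alone. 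My first attempt is a Gårding-type argument: use Gagliardo--Nirenberg $\|u\|_\infty^2\le C\|u\|_{L^2}\|u\|_{H^1}$ and absorb, which gives coercivity of $a_i(u,u)+\lambda\|u\|^2_{L^2}$ and existence via Fredholm alternative / Lax--Milgram on the shifted form. Uniqueness of the kernel follows from the weak maximum principle proved below. A cleaner alternative is to apply Lax--Milgram within the fixed-point map, where the transport term is evaluated on the frozen (truncated, hence bounded by $M^\eps_\infty$) function $v$. Then $\|v\pa_x\rho^{n+1}\|_{L^2}\le M^\eps_\infty\|\pa_x\rho^{n+1}\|_{L^2}$ is exactly what produces the term $\frac{\Delta t}{\eps}(M^\eps_\infty)^2\|\pa_x\rho^{n+1}\|^2_{L^2}$ in~\eqref{L2 rhoi unif deltat}. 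I would follow this second route, with $M^\eps_\infty$ justified a posteriori through $0\le\rho_i^{n+1}\le\rho^{n+1}\le M^\eps_\infty$.

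\textbf{Nonnegativity.} I test with $[\rho_i^{n+1}]_-$. The diffusion term gives $\eps\Delta t\|\pa_x[\rho_i]_-\|^2$. The drift term, on the set where $\rho_i<0$, equals $\Delta t\int[\rho_i]_-\pa_x\rho^{n+1}\pa_x[\rho_i]_-\,\dd x$, which I bound by Young's inequality as above, using the $L^\infty$ bound of $[\rho_i]_-$ via Gagliardo--Nirenberg. On the right, the loss term is $-\rho_i q R\cdot[\rho_i]_-\le\,$ a multiple $q_\infty\Delta t\,|[\rho_i]_-|^2$, which the condition $q_\infty\Delta t<1$ allows me to absorb into the left. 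The gain terms are $\ge0$ times $[\rho_i]_-\le0$, and $\rho_i^n\ge0$. This yields $[\rho_i^{n+1}]_-=0$; if the drift term resists, I would use a Stampacchia/Gronwall-type iteration. Nonnegativity then removes the truncations, and summing the equations and comparing with~\eqref{eq : rho eps delta} gives $\sum_i\rho_i^{n+1}=\rho^{n+1}$, whence $\rho_i^{n+1}\le M^\eps_\infty$. In accordance with the Remark, ${\rm g}$ can be obtained by difference.

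\textbf{Estimate~\eqref{L2 rhoi unif deltat}.} Finally I test with $\rho_i^{n+1}$ itself. I use $ab\le a^2/2+b^2/2$ on $\int(\rho_i^n+\Delta t f_i)\rho_i^{n+1}$, use Young's inequality on the drift term as above with $\|\rho_i^{n+1}\|_\infty\le M^\eps_\infty$, and use $\|f_i\|_\infty\le 4q_\infty M^\eps_\infty$ as in~\eqref{star star}. Expanding $(\rho_i^n+\Delta t f_i)^2\le \ldots$ exactly as in Lemma~\ref{lemma 1} and multiplying by $2$ gives~\eqref{L2 rhoi unif deltat}.
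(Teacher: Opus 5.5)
Your overall route is the paper's: solve ${\rm s}\to{\rm p}\to{\rm e}$ in turn, truncate with $T^{M^\eps_\infty}(v)=\min([v]_+,M^\eps_\infty)$, freeze the truncated unknown in the transport term so that Lax--Milgram applies, and close with Schaefer. This freezing is exactly what produces the term $\frac{\Delta t}{\eps}(M^\eps_\infty)^2\|\pa_x\rho^{n+1}\|^2_{L^2(\Omega)}$.

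Your nonnegativity step, however, misses the observation that makes it work. At the fixed point, the drift term tested with $[\rho_i^{n+1}]_-$ is $\Delta t\int_\Omega T^{M^\eps_\infty}(\rho_i^{n+1})\,\pa_x\rho^{n+1}\,\pa_x[\rho_i^{n+1}]_-\,\dd x$. This vanishes identically, because $T^{M^\eps_\infty}(\rho_i^{n+1})=0$ on $\{\rho_i^{n+1}<0\}$ and $\pa_x[\rho_i^{n+1}]_-=0$ a.e.\ on the complement. The paper uses exactly this (every truncated term drops out), leaving $\|[\rho_i^{n+1}]_-\|^2_{L^2(\Omega)}\le\int_\Omega\rho_i^n[\rho_i^{n+1}]_-\,\dd x\le0$. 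You instead write the drift as the untruncated $\Delta t\int_\Omega[\rho_i]_-\,\pa_x\rho^{n+1}\,\pa_x[\rho_i]_-\,\dd x$ and absorb it with Young and Gagliardo--Nirenberg. In one dimension this leaves a term of order $\Delta t\,(\eps^{-1}\|\pa_x\rho^{n+1}\|^2_{L^2}+\eps^{-3}\|\pa_x\rho^{n+1}\|^4_{L^2})\,\|[\rho_i]_-\|^2_{L^2}$ to be absorbed. Your argument closes only under an extra smallness condition on $\Delta t$ depending on $\eps$ and $\rho^{n+1}$, not under $q_\infty\Delta t<1$, and the ``Stampacchia/Gronwall'' fallback is not an argument. (Also, the reaction contribution that needs $q_\infty\Delta t<1$ is the division term, which has the bad sign, not the loss term.)

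Second, your claim that summing the equations and comparing with~\eqref{eq : rho eps delta} gives $\sum_i\rho_i^{n+1}=\rho^{n+1}$ is false for this scheme. In~\eqref{eq : rho eps delta} the transport coefficient is $\rho^n$ and $f$ is evaluated at $(\rho_j^n)_j$. The sum of the equations~\eqref{eq : rhoi eps delta} instead has coefficient $\sum_i\rho_i^{n+1}$ and reaction $\sum_i f_i$ evaluated at $(\rho_j^{n+1})_j$. Already without reactions, $\rho^{n+1}$ would solve the summed equation only if $(\rho^{n+1}-\rho^n)\,\pa_x\rho^{n+1}\equiv0$.

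So the bound $\rho_i^{n+1}\le M^\eps_\infty$ is not established. You use it to remove the upper truncation, to bound the cross terms (e.g.\ $\rhosc^{n+1}$ in $\fpc$), and in your final Young step. Likewise, defining $\rhogc^{n+1}$ by difference would not produce a solution of the ${\rm g}$-equation. The paper's proof takes the same unsupported step (it asserts $\rho_i^{n+1}\le\rho^{n+1}$ ``by using Lemma~\ref{lemma 1}''), so here you do not diverge from it. Still, you should derive~\eqref{L2 rhoi unif deltat} for the truncated problem, as the paper does in~\eqref{coercivity2}, where $|T^{M^\eps_\infty}(v)|\le M^\eps_\infty$ holds by construction. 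Removing the upper truncation requires an argument that neither your proposal nor the paper provides.
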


\begin{proof}
In order to prove this result, let us observe, thanks to the definitions~\eqref{def : fsc}--\eqref{def : fgc}, that the equations of system~\eqref{eq : rhoi eps delta} are decoupled. Indeed, one can first solve the equation on $\rhosc^{n+1}$, then on $\rhopc^{n+1}$ and finally on $\rhoent^{n+1}$. Since all these equations are quite similar, we only give the detailed analysis for $\rhosc^{n+1}$. First, we introduce the truncation operator defined on $\R$ as
 \begin{equation*} 
    T^{M^\eps_\infty}(x)=\begin{cases}
        0  \quad&\text{ if } x\leq 0,\\x \quad&\text{ if }0\leq x\leq M^\eps_\infty,
        \\
    M^\eps_\infty &\text{ if }x\geq M^\eps_\infty.
    \end{cases}
\end{equation*}
In order to prove Lemma~\ref{lemma 2}, we linearized \eqref{eq : rhoi eps delta} as follows: for a given $\vsc\in L^2(\Omega)$, we look for a solution $\rhosc\in H^1(\Omega)$ to
\begin{equation} \label{weak formulation 2}
    a_{\rm s}(\rhosc,\varphi)=L_{\rm s}(\varphi), \quad\forall\varphi\in H^1(\Omega),
\end{equation}
where $ a_{\rm s}(\cdot,\cdot)$ is a bilinear form defined on $H^1(\Omega)\times H^1(\Omega)$ by:
\begin{equation*}  a_{\rm s}(\rhosc,\varphi)=\int_{\Omega}\rhosc\,\varphi\dd x + \eps\, \Delta t\,\int_{\Omega}\pa_x\rhosc\,\pa_x\varphi\dd x-\Delta t\int_{\Omega}\fsc\left(x,\rho^{n+1}-\eps,T^{M_\infty^\eps}(\rhosc), c^{n}_{\rm b}\right)\,\varphi\,\dd x,
\end{equation*}
and the linear form $L_s(\cdot)$ defined on $H^1(\Omega)$ as:
\begin{equation*}
     L_{\rm s}(\varphi)=\int_{\Omega}\rhosc^{n}\,\varphi\dd x-\Delta t \int_{\Omega}T^{M_\infty^\eps}(\vsc)\,\pa_x\rho^{n+1}\,\pa_x\varphi\,\dd x.
\end{equation*}
As in the proof of the previous lemma, we split the proof in two steps.\medskip

\noindent\textit{\textbf{Step 1. Existence of a fixed point.}} It is clear that $ a_{\rm s}(.,.)$ and $L_s(.)$ are respectively continuous on $(H^1(\Omega))^2$ and $H^1(\Omega)$. Moreover, under assumption \textbf{(H4)} and by recalling definition \eqref{def : fsc} of the source term $f_s$, it follows that for every $\varphi \in H^1(\Omega)$, we have 
\begin{align*}
 a_{\rm s}(\varphi,\varphi)&=\int_{\Omega} \varphi^2\ \dd x +\eps\, \Delta t\,\int_{\Omega}|\pa_x\varphi|^2\dd x+\Delta t\int_\Omega T^{M_\infty^\eps}(\varphi)\varphi \, q_{{\rm s,p}} \, R_{\rm s, p}(x)\dd x\\&-\Delta t \int_{\Omega} T^{M_\infty^\eps}(\varphi)\varphi q_{{\rm div,s}} \, \big(1-R_{{\rm div,s}}(x)\big)\,\big(1-\overline{R}_{{\rm div,s}}(\rho^ {n+1}-\eps)\big)\big(1-\underline{R}_{{\rm div,s}}(c^n_{\rm b})\big)\ \dd x \\&\geq(1-\Delta tq_{{\rm div,s}})\|\varphi\|^2_{L^2(\Omega)}+\eps\, \Delta t\,\|\pa_x\varphi\|^2_{L^2(\Omega)}\geq \min(1-\Delta t\,q_{{\rm div,s}},\eps\,\Delta t)\|\varphi\|^2_{H^1(\Omega)}.
\end{align*}
Therefore, thanks to the condition $q_\infty\,\Delta t<1$ we get the coercivity of $ a_{\rm s}(\cdot,\cdot)$. Then, by Lax-Milgram's theorem, there exists a unique solution $\rhosc$ of \eqref{weak formulation 2}.
Moreover, using $\rhosc$ as a test function in \eqref{weak formulation 2}, and applying Young and H\"older inequalities, we obtain 
\begin{multline*}
 \|\rhosc\|^2_{L^2(\Omega)}+\eps\,\Delta t\,\| \pa_x\rhosc\|^2_{L^2(\Omega)}\leq \frac12\int_\Omega\left(\rhosc^n+\Delta t\,\fsc\right)^2\dd x\\+\Delta t M^\eps_\infty\|\, \pa_x\rho^{n+1}\|_{L^2(\Omega)}\,\|\pa_x\rhosc\|_{L^2(\Omega)}+\frac12\| \rhosc\|^2_{L^2(\Omega)}.
\end{multline*}
Now, using the fact that $ab\leq \frac\eps2a^2+\frac{1}{2\eps}b^2$, we get
\begin{multline*}
    \frac12\| \rhosc\|^2_{L^2(\Omega)}+\frac{\eps\,\Delta t}2\| \pa_x\rhosc\|^2_{L^2(\Omega)}\leq\frac12\| \rhosc^{n}\|^2_{L^2(\Omega)}\\+\Delta t \,\|\rhosc^{n}\|_{L^\infty(\Omega)}\,\|\fsc\|_{L^\infty(\Omega)}+\frac12\,{\Delta t}^2\,\|\fsc\|_{L^\infty(\Omega)}^2+\frac{(M_\infty^\eps)^2}{2\eps}\Delta t\, \| \pa_x\rho^{n+1}\|^2_{L^2(\Omega)}.
\end{multline*}
Now, since $\|\fsc\|_{L^\infty(\Omega)}\leq2M_\infty^\eps q_\infty$, we obtain
\begin{multline}\label{coercivity2}
\| \rhosc\|^2_{L^2(\Omega)}+\eps\,\Delta t\| \pa_x\rhosc\|^2_{L^2(\Omega)}\leq\| \rhosc^{n}\|^2_{L^2(\Omega)}\\+4\Delta t q_\infty(M_\infty^\eps)^2(1+\Delta tq_\infty)+\frac{(M_\infty^\eps)^2}{\eps}\,\Delta t\, \| \pa_x\rho^{n+1}\|^2_{L^2(\Omega)}.
\end{multline}

Moreover, applying once more time Schaefer’s fixed point theorem, similarly as done in the proof of Lemma~\ref{lemma 1}, we deduce the existence of $\rhosc^{n+1}$ in $H^1(\Omega)$ satisfying, for every $\varphi\in H^1(\Omega)$, 
\begin{multline} \label{weak rhosc}
    \int_{\Omega} \rhosc^{n+1}\,\varphi\,\dd x+\eps\, \Delta t\,\int_{\Omega} \pa_x\rhosc^{n+1}\,\pa_x\varphi\,\dd x+\Delta t \int_{\Omega} T^{M^\eps_\infty}(\rhosc^{n+1})\,\pa_x\rho^{n+1}\,\pa_x\varphi\,\dd x\\=\int_{\Omega} \rhosc^n\,\varphi\,\dd x+\Delta t\int_{\Omega}\fsc\left(x,\rho^{n+1}-\eps,T^{M_\infty^\eps}(\rhosc^{n+1}), c^{n}_{\rm b}\right)\,\varphi \,\dd x.
\end{multline}
Finally, estimate~\eqref{L2 rhoi unif deltat} is a direct consequence of~\eqref{coercivity2}.\medskip

\noindent\textit{\textbf{Step 2. Nonnegativity and boundedness of $\rhosc^{n+1}.$}}
Using $[\rhosc^{n+1}]_-$ as a test function in~\eqref{weak rhosc} and recalling definition~\eqref{def : fsc} of $f_{\rm s}$, we have 
\begin{multline*}
    \int_\Omega(\rhosc^{n+1}-\rhosc^{n})[\rhosc^{n+1}]_-\dd x+\eps\,\Delta t\int_\Omega|\pa_x[\rhosc^{n+1}]_-|^2\dd x=-\Delta t\int_\Omega T^{M^\eps_\infty}(\rhosc^{n+1})\,\pa_x\rho^{n+1}\,\pa_x[\rhosc^{n+1}]_-\dd x\\+\Delta t\int_\Omega T^{M^\eps_\infty}(\rhosc^{n+1})[\rhosc^{n+1}]_-q_{{\rm div,s}} \, \big(1-R_{{\rm div,s}}(x)\big)\,\big(1-\overline{R}_{{\rm div,s}}(\rho^{n+1}-\eps)\big)\big(1-\underline{R}_{{\rm div,s}}(c_{\rm b})\big)\dd x\\-\Delta t \int_\Omega T^{M^\eps_\infty}(\rhosc^{n+1})[\rhosc^{n+1}]_- \, q_{{\rm s,p}} \, R_{\rm s, p}(x)\,\dd x.
\end{multline*}
Note that all terms in the r.h.s.~vanishes due to the definition of the operator $T^{M^\eps_\infty}$. Then, we obtain
\begin{equation*}
  \|[\rhosc^{n+1}]_-\|^2_{L^2(\Omega)} - \int_\Omega\rhosc^n\,[\rhosc^{n+1}]_-\,\dd x\leq0,
\end{equation*}
which yields
\begin{equation*}
   \|[\rhosc^{n+1}]_-\|^2_{L^2(\Omega)}\leq0.
\end{equation*}
Therefore, we conclude that $\rhosc^{n+1}\geq 0.$
By respecting the order specified at the beginning of the proof, we can show that all  $(\rho_i^{n+1})_{i\in\T}$ are nonnegative. In addition, by using Lemma~\ref{lemma 1}, we get
\begin{equation}
    0\leq \rhosc^{n+1},\rhopc^{n+1},\rhogc^{n+1},\rhoent^{n+1}\leq \rho^{n+1}\leq M_\infty^\eps.
\end{equation}
This ends the proof of Lemma~\ref{lemma 2}.
\end{proof}

\subsection{Study of the equation on the concentration}

\begin{lemma}\label{lemma 3}
Let the assumptions of Lemma~\ref{lemma 2} hold. Then, there exists a nonnegative function $c_{\rm b}^{n+1} \in H$, see definition~\eqref{def : H} of $H$, weak solution to~\eqref{eq : cb eps delta} which satisfies the following uniform in $\eps$ estimate
\begin{equation}\label{eq cb unif delta}
  \| c_{\rm b}^{n+1}\|^2_{L^2(\Omega)}+\Delta t\,\sigma_{\rm b} \| \pa_x c_{\rm b}^{n+1}\|^2_{L^2(\Omega)}\leq  \| c_{\rm b}^{n}\|^2_{L^2(\Omega)}+2\frac{\Delta t}{\sigma_{\rm b}}  {(M_\infty^\eps)}^2\gamma^2.
\end{equation}
\end{lemma}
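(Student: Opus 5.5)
Equation~\eqref{eq : cb eps delta} is linear in $c_{\rm b}^{n+1}$: its right-hand side depends only on quantities already known at this stage, namely $c_{\rm b}^n$, $\rhoent^{n+1}$ and $\rhogc^{n+1}$. We therefore apply Lax--Milgram directly on the Hilbert space $H$, with no fixed-point argument. The weak formulation is: find $c\in H$ such that for all $\psi\in H$,
\[
\int_\Omega c\,\psi\,\dd x+\sigma_{\rm b}\Delta t\int_\Omega \pa_x c\,\pa_x\psi\,\dd x=\int_\Omega c_{\rm b}^n\psi\,\dd x+\gamma\Delta t\int_\Omega g^n\,(\rhoent^{n+1}+\rhogc^{n+1})\,\psi\,\dd x,
\]
where $g^n:=\frac{c^n_{\rm b}+c_{\rm b}^d}{1+c^n_{\rm b}+c_{\rm b}^d}$. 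Since $c_{\rm b}^n\ge0$, we have $0\le g^n\le 1$. The bilinear form is continuous on $H$, and it is coercive with constant $\min(1,\sigma_{\rm b}\Delta t)$ in the $H^1$ norm. The linear form is continuous because $c_{\rm b}^n\in L^2(\Omega)$ and, by Lemma~\ref{lemma 2}, $0\le\rhoent^{n+1}+\rhogc^{n+1}\le\rho^{n+1}\le M_\infty^\eps$. This gives a unique $c_{\rm b}^{n+1}\in H$.

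For nonnegativity, test with $[c_{\rm b}^{n+1}]_-$, which belongs to $H$ because it vanishes at $x=1$. The source term has sign $\ge0$ times $[\cdot]_-\le0$, so its contribution is nonpositive. The same holds for $\int c_{\rm b}^n[c_{\rm b}^{n+1}]_-$, since $c_{\rm b}^n\ge0$. The diffusion term gives $\sigma_{\rm b}\Delta t\|\pa_x[c_{\rm b}^{n+1}]_-\|^2\ge0$. Hence $\|[c_{\rm b}^{n+1}]_-\|_{L^2}^2\le0$, so $c_{\rm b}^{n+1}\ge0$.

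For estimate~\eqref{eq cb unif delta}, test with $c_{\rm b}^{n+1}$. Handle the first term on the right by Young's inequality: $\int c^n c^{n+1}\le\frac12\|c^n\|^2+\frac12\|c^{n+1}\|^2$. The point to get right is the source term. To obtain a factor $1/\sigma_{\rm b}$ and no $L^2$ norm of $c^{n+1}$ left over, we must absorb it into the gradient term, which requires a Poincar\'e inequality on $H$. Because $c(1)=0$, we have $c(x)=-\int_x^1\pa_x c$, so
\[
\|c\|_{L^2(\Omega)}\le\|c\|_{L^\infty(\Omega)}\le\|\pa_x c\|_{L^2(\Omega)}.
\]
Then
\[
\gamma\Delta t\int g^n(\rhoent^{n+1}+\rhogc^{n+1})c^{n+1}\le\gamma\Delta t M_\infty^\eps\|\pa_x c^{n+1}\|_{L^2}\le\frac{\sigma_{\rm b}\Delta t}{2}\|\pa_x c^{n+1}\|^2_{L^2}+\frac{\Delta t}{2\sigma_{\rm b}}\gamma^2(M_\infty^\eps)^2.
\]
Collecting terms gives
\[
\tfrac12\|c^{n+1}\|^2+\tfrac{\sigma_{\rm b}\Delta t}{2}\|\pa_x c^{n+1}\|^2\le\tfrac12\|c^n\|^2+\tfrac{\Delta t}{2\sigma_{\rm b}}\gamma^2(M_\infty^\eps)^2.
\]
Multiplying by $2$ yields~\eqref{eq cb unif delta}, in fact with constant $1$ in place of $2$. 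The estimate is uniform in $\eps$ apart from the harmless $M_\infty^\eps\le M_\infty+1$.
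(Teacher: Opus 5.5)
Your proof is correct and follows the paper's argument. Both use Lax--Milgram on $H$, test with $[c_{\rm b}^{n+1}]_-$ for nonnegativity, and test with $c_{\rm b}^{n+1}$ while absorbing the source term through the Poincar\'e inequality on $H$; the only difference is that the paper applies Young first and then $\|c\|_{L^2(\Omega)}^2\le\frac12\|\pa_x c\|_{L^2(\Omega)}^2$, whereas you apply the Poincar\'e bound before Young.

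Your sharper constant $1$ rests on $\rhoent^{n+1}+\rhogc^{n+1}\le\rho^{n+1}\le M_\infty^\eps$, i.e.\ on $\sum_{i\in\T}\rho_i^{n+1}=\rho^{n+1}$. This is the same fact the paper implicitly uses to get $\rho_i^{n+1}\le\rho^{n+1}$ in Lemma~\ref{lemma 2}. Note that the paper's factor $2$ would survive the cruder bound $\rhoent^{n+1}+\rhogc^{n+1}\le 2M_\infty^\eps$, whereas your chain would then only give $4$.
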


\begin{proof}
 For all $c_{\rm b}^{n+1}$ and $\varphi\in H$, we consider the following problem: find $c_{\rm b}^{n+1}\in H$ such that 
 \begin{equation}\label{weak formulation 3}
   a_{\rm b}(c_{\rm b}^{n+1},\varphi)=  L_{\rm b}(\varphi),
 \end{equation}
  where  $ a_{\rm b}(\cdot,\cdot)$ is a bilinear form defined on $H\times H$ as
\begin{equation*}
    a_{\rm b}(c_{\rm b}^{n+1},\varphi)= \int_\Omega c_{\rm b}^{n+1} \varphi\,\dd x +\Delta t\,\sigma_{\rm b}\int_\Omega \pa_xc_{\rm b}^{n+1} \pa_x\varphi\,\dd x,
\end{equation*}
and  $ L_{\rm b}$ is a linear form defined on $H$ as 
\begin{equation*}
     L_{\rm b}(\varphi)=\int_\Omega c_{\rm b}^{n} \,\varphi\,\dd x+\gamma\Delta t\int_\Omega \, \dfrac{c^n_{\rm b}+c_{\rm b}^d}{1+c^n_{\rm b}+c_{\rm b}^d} \, (\rhoent^{n+1} + \rhogc^{n+1})\varphi\,\dd x.
\end{equation*}
First,  it is clear that the form $ L_{\rm b}(.)$ is continuous on $H$ and  $ a_{\rm b}(.,.)$ is continuous and coercive on $(H)^2$. Then, by Lax Milgram's theorem we get the existence of a unique solution $c_{\rm b}^{n+1}\in H$ of \eqref{weak formulation 3}.
Now, let us prove the nonnegativity of $c_{\rm b}^{n+1}$. Since $[c_{\rm b}^{n+1}]_-\in H$, we use it as a test function in \eqref{weak formulation 3}. Then, we have
\begin{multline*}
    \int_\Omega|[c_{\rm b}^{n+1}]_-|^2\dd x+\Delta t \sigma_{\rm b}\int_\Omega|\pa_x[c_{\rm b}^{n+1}]_-|^2\dd x =\int_\Omega c_{\rm b}^n[c_{\rm b}^{n+1}]_-\dd x\\+\gamma\Delta t  \int_\Omega\, \dfrac{c^n_{\rm b}+c_{\rm b}^d}{1+c^n_{\rm b}+c_{\rm b}^d} \, (\rhoent^{n+1} + \rhogc^{n+1})[c_{\rm b}^{n+1}]_-\,\dd x.
\end{multline*}
Thanks to the nonnegativity of $c_{\rm b}^n$, $\rhoent^{n+1}$ and $\rhogc^{n+1}$, we get $c_{\rm b}^{n+1}\geq 0$. 

Now, let us establish~\eqref{eq cb unif delta}. For this purpose, we take $c_{\rm b}^{n+1}\in H$ as a test function in~\eqref{weak formulation 3}. After an integration by parts, we have
\begin{multline*}
     \int_\Omega \left(c_{\rm b}^{n+1}-c_{\rm b}^n\right)c_{\rm b}^{n+1}\,\dd x +\Delta t\,\sigma_{\rm b}\int_\Omega |\pa_x c_{\rm b}^{n+1}|^2\dd x=\gamma\Delta t\int_\Omega \, \dfrac{c^n_{\rm b}+c_{\rm b}^d}{1+c^n_{\rm b}+c_{\rm b}^d} \, (\rhoent^{n+1} + \rhogc^{n+1})c_{\rm b}^{n+1}\,\dd x.
\end{multline*}
Using the formula $a(a-b)\geq\frac{a^2}{2}-\frac{b^2}{2}$ together with $fg\leq \sigma_{\rm b} f^2+\frac{1}{4\sigma_{\rm b}}g^2$, we get
\begin{multline}\label{L_2 priori delta t cb}
  \frac12 \| c_{\rm b}^{n+1}\|^2_{L^2(\Omega)}+\Delta t\,\sigma_{\rm b} \| \pa_x c_{\rm b}^{n+1}\|^2_{L^2(\Omega)}\leq \frac12 \| c_{\rm b}^{n}\|^2_{L^2(\Omega)}+\frac{\Delta t}{\sigma_{\rm b}}  (M_\infty^\eps)^2\gamma^2+\Delta t\,\sigma_{\rm b}\| c_{\rm b}^{n+1}\|^2_{L^2(\Omega)}.
\end{multline}
On the other hand, we have for any $x\in\Omega$ that \begin{align*}
    | c_{\rm b}^{n+1}|^2\leq\left(\int_x^1| \pa_x c_{\rm b}^{n+1}(y)|dy\right)^2\leq (1-x)\| \pa_x c_{\rm b}^{n+1}\|^2_{L^2(\Omega)},
\end{align*}
 so that
 \begin{equation}\label{Poincare}
    \| c_{\rm b}^{n+1}\|^2_{L^2(\Omega)}\leq \frac12\| \pa_x c_{\rm b}^{n+1}\|^2_{L^2(\Omega)}.
\end{equation}
It remains to plug it into the equation \eqref{L_2 priori delta t cb} to obtain \eqref{eq cb unif delta}.
\end{proof}

\subsection{Proof of Proposition~\ref{prop : semi discrete}}
Using recursively Lemma \ref{lemma 1}, Lemma \ref{lemma 2} and Lemma \ref{lemma 3}, we deduce the existence of $\rho^{n}$, $\rhosc^{n}$, $\rhopc^{n}$, $\rhoent^{n}$, $\rhogc^{n}$, and $c_{\rm b}^{n}$ for any $1\leq n\leq N_T$ (weak) solutions to~\eqref{eq : rho eps delta}--\eqref{eq : cb eps delta}. Now, we recall that $\Delta t=T/N_T$ and $t_n=n\Delta t$. For any $n \in \{0,...,N_T-~1\}$ and $i\in\T$, we set the piecewise continuous functions in time   \begin{equation*}
    \rho_i^{ \Delta t}(x,t):=\rho_i^{ n+1}(x), \quad \text { for } t \in\left(t_{n}, t_{n+1}\right],
\end{equation*}
with $\rho_i^{\Delta t}(x, 0):=\rho_i^{\eps,0}(x)$  and the function  \begin{equation*}
    c_{\rm b}^{ \Delta t}(x,t):=c_{\rm b}^{ n+1}(x), \quad \text { for } t \in\left(t_{n}, t_{n+1}\right],
\end{equation*}
with $c_{\rm b}^{\Delta t}(x, 0):=c_{\rm b}^{0}(x)$. 
Let $0< J\leq N_T-1$. Then, $\rho^{\Delta t}$ solves the equation
\begin{multline}\label{eq rho delta}
  \frac1{\Delta t}\int_{\Omega \times (\Delta t,T)}\left(\rho^{ \Delta t}(x,t)-\rho^{ \Delta t}(x,t-\Delta t)\right)\phi(x,t)\,\dd x \,\dd t\\ +\eps \int_{\Omega \times (\Delta t,T)} \pa_x \rho^{\Delta t}(x,t) \, \pa_x \phi(x,t) \, \dd x\dd t + \int_{\Omega \times (\Delta t,T)} \rho^{\Delta t}(x,t-\Delta t) \, \pa_x \rho^{\Delta t}(x,t) \, \pa_x \phi(x,t) \, \dd x \dd t \\
= \int_{\Omega \times (\Delta t,T)} f\left(x,\rho^{\Delta t}(x,t)-\eps,\rhosc^{\Delta t}(x,t-\Delta t),\rhopc^{\Delta t}(x,t-\Delta t),\rhoent^{\Delta t}(x,t-\Delta t),\right.\\\left.\rhogc^{\Delta t}(x,t-\Delta t),c_{\rm b}^{\Delta t}(x,t-\Delta t)) \, \phi(x,t) \right)\, \dd x \dd t,
\end{multline}
for piecewise constant functions $\phi:(0,T)\to H^1(\Omega).$ It follows from \cite[Proposition~1.36]{roubivcek2005nonlinear}, that this set of functions is dense in $L^2(0,T;H^1(\Omega))$.

Assuming that $\Delta t < 1$, we sum over $n$ equation~\eqref{coercivity-L2 rho unif delta} and we get 
\begin{equation*}
   \sum_{n=0}^{J}\left( \|\rho^{n+1}\|^2_{L^2(\Omega)}-\|\rho^{n}\|^2_{L^2(\Omega)}\right)+2\eps \,\|\pa_x\rho^{\Delta t}\|^2_{L^2(\Omega\times(0,t_{J+1}))}\leq 8\,T\,q_\infty (M_\infty^\eps)^2(1+2\,q_\infty).
\end{equation*}
Note that $\rho^{J+1}(x)=\rho^{\Delta t}(x,t) $ for $ t\in\left(t_{J}, t_{J+1}\right]$. Then, we have
\begin{equation}\label{unifdeltat rho}
     \|\rho^{\Delta t}(.,t)\|^2_{L^2(\Omega)}+2\eps\, \|\pa_x\rho^{\Delta t}\|^2_{L^2(\Omega\times(0,t_{J+1}))}\leq \|\rho^{\eps,0}\|^2_{L^2(\Omega)}+8\,T\,q_\infty\, (M_\infty^\eps)^2(1+2\,{q_\infty}).
\end{equation}
Since it is true for any $0\leq J\leq N_T-1$,  we can see that $\rho^{\Delta t}$ is uniformly bounded w.r.t.~$\Delta t$ in $L^2(0,T;H^1(\Omega))$.

 Now, let us derive a uniform estimate for the discrete time derivative of $\rho^{\Delta t}$. For this purpose, we take $\varphi\in L^2(0,T;H^1(\Omega))$ and use~\eqref{eq rho delta}. Then, we have 
\begin{align*}
&\frac1{\Delta t}\left|\int_{\Delta t}^T\int_\Omega\left(\rho^{ \Delta t}(x,t)-\rho^{ \Delta t}(x,t-\Delta t)\right)\varphi\,\dd x \,\dd t\right|
= \left|\sum_{n=1}^{N_T-1}\int_{t_n}^{t_{n+1}}\int_\Omega\frac{\rho^{ n+1}-\rho^{ n}}{\Delta t}\,\varphi\,\dd x \,\dd t\right| \\
&\leq  \sum_{n=1}^{N_T-1}\left((\eps+M_\infty^\eps)\int_{t_n}^{t_{n+1}}\|\pa_x\rho^{ n+1}\|_{L^2(\Omega)}\|\pa_x\varphi\|_{L^2(\Omega)}\,\dd t+\int_{t_n}^{t_{n+1}}\|f\|_{L^2(\Omega)}\|\varphi\|_{L^2(\Omega)}\dd t\right).
\end{align*}
Hence,
\begin{multline*}
\frac1{\Delta t}\left|\int_{\Delta t}^T\int_\Omega\left(\rho^{ \Delta t}(x,t)-\rho^{ \Delta t}(x,t-\Delta t)\right)\varphi\,\dd x \,\dd t\right|\\
\leq (\eps+M_\infty^\eps)\left(\sum_{n=1}^{N_T-1}\int_{t_n}^{t_{n+1}}\|\pa_x\rho^{ n+1}\|^2_{L^2(\Omega)}\dd t\right)^{\frac12}\|\pa_x\varphi\|_{L^2(\Omega\times(\Delta t,T))}\\
+4 q_\infty\, M_\infty^\eps\,T^{\frac12}\,\|\varphi\|_{L^2(\Omega\times(\Delta t,T))}, 
\end{multline*}
so that
\begin{multline*}
\frac1{\Delta t}\left|\int_{\Delta t}^T\int_\Omega\left(\rho^{ \Delta t}(x,t)-\rho^{ \Delta t}(x,t-\Delta t)\right)\varphi\,\dd x \,\dd t\right|\\
\leq(\eps+M_\infty^\eps) \|\pa_x\rho^{\Delta t}\|_{L^2(\Omega\times(0,T))}\|\pa_x\varphi\|_{L^2(\Omega\times(0,T))}+4\,q_\infty\, M_\infty^\eps\,T^{\frac12}\,\|\varphi\|_{L^2(\Omega\times(0,T))}.
\end{multline*}
Therefore, we deduce the existence of a constant $C$ independent of $\Delta t$ such that 
\begin{equation*}
    \frac1{\Delta t}\|\rho^{ \Delta t}-\rho^{ \Delta t}(.,.-\Delta t)\|_{L^2(\Delta t,T;(H^1(\Omega))')}\leq C.
\end{equation*}
It remains to apply Aubin–Lions lemma in the version of \cite[Theorem 1]{MR2890969}, 
to deduce that $(\rho^{\Delta t})_{\Delta t}$ is relatively compact in $L^2(\Omega\times(0,T))$ and that there exists $\rho^\eps\in L^2(0,T;H^1(\Omega))$ such that, up to a subsequence, we get
\begin{gather*}
     \pa_x\rho^{\Delta t}\rightharpoonup \pa_x\rho^\eps \quad\text{ weakly in }L^2(\Omega\times(0,T)),\\
    \rho^{\Delta t} \longrightarrow \rho^\eps \quad\text{ strongly in } L^2(\Omega\times(0,T)).
\end{gather*}
In addition, we have 
\begin{equation*}
    \frac{\rho^{n+1}-\rho^{n}}{\Delta t}=\frac{\rho^{\Delta t}(\cdot+\Delta t)-\rho^{\Delta t}(\cdot)}{\Delta t}\rightharpoonup \pa_t\rho^\eps, \quad\text{ as $\Delta t \to 0$, weakly in } L^2(t_{n-1},t_n;(H^1(\Omega))').
    \end{equation*}
Finally, by passing to the limit in \eqref{eq rho delta}, as $\Delta t\to 0$ and for every $\varphi\in L^2(0,T;H^1(\Omega))$, we obtain 
\begin{multline}
\int_{\Omega \times (0,T)} \pa_t \rho^\eps \, \varphi \, \dd x \dd t + \eps \int_{\Omega \times (0,T)} \pa_x \rho^\eps \, \pa_x \varphi \, \dd x\dd t + \int_{\Omega \times (0,T)} \rho^\eps \, \pa_x \rho^\eps \, \pa_x \varphi \, \dd x \dd t \\ 
= \int_{\Omega \times (0,T)} f(x,\rho^\eps-\eps,\rhose,\rhope,\rhoee,\rhoge,c_{\rm b}^\eps) \, \varphi \, \dd x \dd t.
\end{multline}
Similarly, we sum the estimate \eqref{L2 rhoi unif deltat}.
Then, for all $t\in(t_J,t_{J+1}]$ and $\Delta t<1$, we have  
\begin{multline}\label{unif deltat rhosc}
     \|\rhosc^{\Delta t}(.,t)\|^2_{L^2(\Omega)}+\eps \|\pa_x\rhosc^{\Delta t}\|^2_{L^2(\Omega\times(0,t_{J+1}))}\leq \|\rhosc^{\eps,0}\|^2_{L^2(\Omega)}\\+\frac{(M_\infty^\eps)^2}\eps  \|\pa_x\rho^{\Delta t}\|^2_{L^2(\Omega\times(0,T))}+8Tq_\infty (M_\infty^\eps)^2(1+2{q_\infty}).
\end{multline}
Since it is true for any $0\leq J\leq N_T-1$,  we can see that $\rhosc^{\Delta t}$ is uniformly bounded in $\Delta t$ in $L^2(0,T;H^1(\Omega))$.

Furthermore, the control in time is obtained by following the same approach as previously, using \eqref{weak rhosc}. In fact, we have
\begin{multline*}
\frac1{\Delta t}\left|\int_{\Delta t}^T\int_\Omega\left(\rhosc^{ \Delta t}(x,t)-\rhosc^{ \Delta t}(x,t-\Delta t)\right)\varphi\,\dd x \,\dd t\right|\leq\eps\|\pa_x\rhosc^{\Delta t}\|_{L^2(\Omega\times(0,T))}\|\pa_x\varphi\|_{L^2(\Omega\times(0,T))}\\+M_\infty^\eps\|\pa_x\rho^{\Delta t}\|_{L^2(\Omega\times(0,T))}\|\pa_x\varphi\|_{L^2(\Omega\times(0,T))}+2q_\infty M_\infty^\eps\|\varphi\|_{L^2(\Omega\times(0,T))}\leq C,
\end{multline*}
where $C$ is independent of $\Delta t$.
Therefore, we obtain 
\begin{equation*}
    \frac1{\Delta t}\|\rhosc^{ \Delta t}-\rhosc^{ \Delta t}(.,.-\Delta t)\|_{L^2(\Delta t,T;(H^1(\Omega))')}\leq C.
\end{equation*}
Applying once more  Aubin–Lions lemma in the version of \cite[Theorem 1]{MR2890969}, we deduce that $(\rhosc^{\Delta t})_{\Delta t}$ is relatively compact in $L^2(0,T;L^2(\Omega))$.
Finally, by passing to the limit, as $\Delta t\to 0$, we obtain  the existence of $\rhose$ satisfiying~\eqref{weak eq rhoi eps} and similarly for the other densities $\rho_i^\eps$.

Eventually, applying the same strategy to \(c_{\rm b}^{n+1}\), the weak solution of \eqref{eq : cb eps delta}, and then letting \(\Delta t \to 0\), we obtain the existence of $c_{\rm b}^\eps \in L^2(0,T;H)$ which  satisfies \eqref{weak eq cb eps}. This completes the proof of Proposition~\ref{prop : semi discrete}.


\section{Existence proof}\label{sec : exi proof}

In this section we prove Theorem~\ref{thm1}. For this purpose, we first establish uniform with respect to $\eps$ estimates.


\subsection{Uniform estimates}\label{subsec : unif}

\begin{proposition}\label{propistion 2}
Let the assumptions of Theorem~\ref{thm1} hold. Then, there exist a constant $C>0$, only depending on $c^0_b, c_{\rm b}^d, M_\infty, q_\infty, \|\pa_x\rho^0\|_{L^2(\Omega)}$ and $T$ such that
\begin{align} \label{eq : unif eps}
\sum_{i \in \T} \|\pa_x\rho^\eps_i\|_{L^\infty(0,T;L^1(\Omega))} + \|\pa_x \rho^\eps\|^2_{L^\infty(0,T;L^2(\Omega))} + \|\pa_x c_{\rm b}^\eps\|^2_{L^2(\Omega\times(0,T))} \leq C.
\end{align}
\end{proposition}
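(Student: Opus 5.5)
We first bound $c_{\rm b}^\eps$, then $\pa_x\rho^\eps$ in $L^\infty(0,T;L^2)$, and finally the BV bound on the fractions $w_i^\eps=\rho_i^\eps/\rho^\eps$, from which the $L^1$ bound on $\pa_x\rho_i^\eps$ follows.

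\emph{Step 1: the concentration.} Summing \eqref{eq cb unif delta} over $n$ and passing to the limit $\Delta t\to0$ gives the energy estimate
\[
\|c_{\rm b}^\eps\|_{L^\infty(0,T;L^2)}^2+\sigma_{\rm b}\|\pa_x c_{\rm b}^\eps\|^2_{L^2(\Omega\times(0,T))}\le \|c^0_{\rm b}\|^2_{L^2}+2T(M^\eps_\infty)^2\gamma^2/\sigma_{\rm b}.
\]
This is uniform in $\eps<1$, since $M^\eps_\infty\le M_\infty+1$ and $\eps\le\rho^\eps\le M^\eps_\infty$. I will also need $\pa_x c^\eps_{\rm b}\in L^\infty(0,T;L^2)$, since the source terms depend on $c_{\rm b}$. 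This follows from testing with $-\pa_x^2 c_{\rm b}^\eps$ (or $\pa_t c_{\rm b}^\eps$), using $c^0_{\rm b}\in H^1$ and the bounded right-hand side; it yields $\pa_t c_{\rm b}^\eps,\ \pa_x^2c_{\rm b}^\eps\in L^2$ uniformly.

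\emph{Step 2: the total density.} Summing the partial equations, $\rho^\eps$ solves
\[
\pa_t\rho^\eps-\pa_x\big((\eps+\rho^\eps)\pa_x\rho^\eps\big)=f.
\]
Set $\Phi(\rho)=\int_0^\rho(\eps+s)\,\dd s$ and test with $\pa_t\Phi(\rho^\eps)=(\eps+\rho^\eps)\pa_t\rho^\eps$; this is formal and would be justified on the semi-discrete scheme or with the regularity from Step 1. We obtain
\[
\int(\eps+\rho^\eps)|\pa_t\rho^\eps|^2+\frac{\dd}{\dd t}\frac12\int|\pa_x\Phi(\rho^\eps)|^2=\int f(\eps+\rho^\eps)\pa_t\rho^\eps .
\]
Young's inequality with $\|f\|_\infty\le 4M^\eps_\infty q_\infty$ closes this, giving $\sup_t\|(\eps+\rho^\eps)\pa_x\rho^\eps\|_{L^2}\le C$. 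The problem is that this controls $(\eps+\rho)\pa_x\rho$ rather than $\pa_x\rho$, and $\rho^0$ may vanish. My alternative is to test with $-\pa_x^2\rho^\eps$ directly. Writing $\pa_x((\eps+\rho)\pa_x\rho)=(\eps+\rho)\pa_x^2\rho+|\pa_x\rho|^2$, we get
\[
\frac12\frac{\dd}{\dd t}\|\pa_x\rho\|^2+\int(\eps+\rho)|\pa_x^2\rho|^2=-\int|\pa_x\rho|^2\pa_x^2\rho-\int f\,\pa_x^2\rho .
\]
The cubic term is $-\int|\pa_x\rho|^2\pa_x^2\rho=-\tfrac13\big[(\pa_x\rho)^3\big]_0^1=0$ by the Neumann conditions. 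The source term, after integrating by parts, becomes $\int \pa_x f\,\pa_x\rho$. Here $\pa_x f$ involves $\pa_x\rho_i$, $\pa_x\rho$, $\pa_x c_{\rm b}$ and the $W^{1,\infty}$ norms of the $R$'s from \textbf{(H4)}. Since $f$ is a combination $\sum_i\rho_i g_i(x,\rho,c_{\rm b})$, I would write it as $\rho\sum_i w_i g_i$, which gives
\[
|\pa_x f|\le C\big(|\pa_x\rho|+|\pa_x c_{\rm b}|+1\big)+C\rho\sum_i|\pa_x w_i|.
\]
The $w_i$ term is only in $L^1$ and must be handled together with Step 3.

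\emph{Step 3: BV bound on the fractions, the main obstacle.} The fractions satisfy a transport–diffusion equation,
\[
\rho\,\pa_t w_i-\eps\pa_x^2 w_i\cdot(\cdot)-\big((\eps+\rho)\pa_x\rho+2\eps\pa_x\rho\big)\pa_x w_i\approx f_i-w_i f,
\]
up to lower-order terms of the type $\eps\,\pa_x\log\rho$. Following \cite{GalSel15}, I would differentiate in $x$ and test with ${\rm sign}(\pa_x w_i)$ to get an $L^1$ contraction for $\pa_x w_i$. The reaction terms $(f_i-w_if)/\rho$ are linear in the $w_j$ with coefficients depending on $(x,\rho,c_{\rm b})$, so they contribute $C\sum_j\|\pa_x w_j\|_{L^1}+C\|\pa_x\rho\|_{L^1}+C\|\pa_xc_{\rm b}\|_{L^1}$. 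This yields a Gronwall inequality for $\sum_i\|\pa_xw_i\|_{L^1}$ coupled with $\|\pa_x\rho\|_{L^2}$ and $\|\pa_xc_{\rm b}\|_{L^2}$.

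Combining Steps 2 and 3, I expect a closed system once I bound $\int\rho|\pa_x w_i||\pa_x\rho|\le M_\infty\|\pa_x\rho\|_{L^\infty}\|\pa_x w_i\|_{L^1}$. The $L^\infty$ norm here is controlled through $\|\pa_x\rho\|_{L^\infty}^2\lesssim \|\pa_x\rho\|_{L^2}\|\pa_x^2\rho\|_{L^2}$. The difficulty is that the dissipation only controls the weighted quantity $\int(\eps+\rho)|\pa_x^2\rho|^2$, so absorption fails where $\rho$ is small; this is exactly where the degeneracy bites. My fallback is to derive the $L^2$ bound for $\pa_x\rho$ with the source term $\rho\sum_i w_ig_i$ treated by $\int f\,\pa_x^2\rho$ and absorption into $\int\rho|\pa_x^2\rho|^2$. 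This uses $|f|\le C\rho$, so $f^2/\rho\le C\rho$, which avoids differentiating $f$ entirely. I expect this to work, and it makes Step 2 independent of the $w_i$.

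Finally, $\pa_x\rho_i=w_i\pa_x\rho+\rho\,\pa_x w_i$ gives
\[
\|\pa_x\rho_i\|_{L^1}\le\|\pa_x\rho\|_{L^2}+M_\infty\|\pa_x w_i\|_{L^1},
\]
which completes \eqref{eq : unif eps}.
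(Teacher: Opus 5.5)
Once the detours are set aside (the $\Phi$-test, integrating $\pa_x f$ by parts, and the unneeded $L^\infty(0,T;H^1)$ bound on $c_{\rm b}^\eps$), your argument is correct and coincides with the paper's. Your Step~2 ``fallback'' is exactly how the paper gets the $L^\infty(0,T;L^2)$ bound on $\pa_x\rho^\eps$ independently of the $w_i^\eps$: you test with $-\pa_x^2\rho^\eps$, the cubic term vanishes by the Neumann conditions, and $\int f\,\pa_x^2\rho^\eps$ is absorbed into $\int\rho^\eps|\pa_x^2\rho^\eps|^2$ using $|f|\le q_\infty\rho^\eps$. Your Step~3 is the paper's $L^1$ estimate for $\pa_x w_i^\eps$ (via $\sqrt{(\pa_x w_i^\eps)^2+\eta^2}$), followed by Gronwall and $\pa_x\rho_i^\eps=w_i^\eps\pa_x\rho^\eps+\rho^\eps\pa_x w_i^\eps$.

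The one point you leave implicit is the initial value of the Gronwall argument. It is $\sum_i TV(w_i^{\eps,0})$ with $w_i^{\eps,0}=(\rho_i^0+\eps/4)/(\rho^0+\eps)$, which must be bounded uniformly in $\eps$. This is where the assumption $w_i^0\in BV(\Omega)$ from \textbf{(H1)} enters, and the paper handles it by arguing that $TV(w_i^{\eps,0})\to TV(w_i^0)$.
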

\begin{proof}
First, assuming that $\eps <1$, the uniform bound of $\|\pa_x c_{\rm b}^\eps(t)\|^2_{L^2(\Omega\times(0,T))}$ is a direct consequence of~\eqref{eq cb unif delta}. Let us now establish an $L^2$ uniform in $\eps$ estimate for $\pa_x\rho^\eps$. To this end, we write the equation of $\rho^\eps$
\begin{equation} \label{eq rho eps}
    \pa_t \rho^{\eps} - \eps \pa_x^2 \rho ^{\eps}- \pa_x(\rho^{\eps} \, \pa_x \rho^\eps) = f(x,\rho^{\eps}-\eps,\rhose,\rhope,\rhoee,\rhoge,c_{\rm b}^\eps), \,\,\,\,\qquad \mbox{in }\Omega \times (0,T),
\end{equation}
where we recall the definition \eqref{def : f} of the source term $f$. 

Thanks to the assumption \emph{{\bf (H4)}} and the parabolic regularity of the nondegenerate equation, see for instance Theorem 5 in~\cite[Chapter 7]{MR2597943}, it can be seen that $\rho^\eps\in L^2(0,T;H^3(\Omega))$.
After differentiating \eqref{eq rho eps}  with respect to 
$x$, we then multiply the equation by $\pa_x\rho^\eps\in L^2(0,T;L^2(\Omega))$ to obtain 
\begin{multline*}
    \frac12\frac{d}{dt}\int_\Omega|\pa_x\rho^\eps|^2\dd x+\eps\int_\Omega |\pa_x^2\rho^\eps|^2\dd x =\int_\Omega\pa_x^2(\rho^\eps\pa_x\rho^\eps)\pa_x\rho^\eps\dd x\\+\int_\Omega \pa_x[f(x,\rho^\eps-\eps,\rhose,\rhope,\rhoee,\rhoge,c_{\rm b}^\eps)]\pa_x\rho^\eps\dd x.
\end{multline*}
After an integration by parts, we get 
\begin{multline*}
  \frac12\frac{d}{dt}\int_\Omega|\pa_x\rho^\eps|^2\dd x+\eps\int_\Omega |\pa_x^2\rho^\eps|^2\dd x =-\frac{1}{3}\int_\Omega\pa_x\left((\pa_x\rho^\eps)^3\right) \dd x  -\int_\Omega\rho^\eps|\pa_x^2\rho^\eps|^2\dd x \\-\int_\Omega f(x,\rho^\eps-\eps,\rhose,\rhope,\rhoee,\rhoge,c_{\rm b}^\eps)\pa_x^2\rho^\eps\dd x.
\end{multline*}
Thanks to Young's inequality, we have
\begin{align}\label{borne pour ln}
      \frac12\frac{d}{dt}\int_\Omega|\pa_x\rho^\eps|^2\dd x+\eps\int_\Omega |\pa_x^2\rho^\eps|^2\dd x+ \frac12\int_\Omega\rho^\eps|\pa_x^2\rho^\eps|^2\dd x\leq \frac12 q_\infty^2\int_\Omega\rho^\eps \dd x.
\end{align}
It remains to integrate in time to obtain 
\begin{align}\label{eq : pax rho eps}
\|\pa_x\rho^\eps(t)\|^2_{L^2(\Omega)}+2\eps\|\pa_x^2\rho^\eps\|^2_{L^2(\Omega\times(0,T))}+\int_0^T\int_\Omega\rho^\eps|\pa_x^2\rho^\eps|\dd x\dd t\leq q_\infty^2 TM_\infty^\eps+\|\pa_x\rho^{0}\|^2_{L^2(\Omega)}.
\end{align}
Finally, for $\eps<1,$  we obtain the uniform bound.

Now, in order to prove that $\rho_i^\eps\in BV(\Omega)$ uniformly in $\eps$, we need to establish a $BV$-estimate on $w_i^\eps$ defined as  $w_i^{\eps}=\rho_i^{\eps}/\rho^{\eps}$, for all $i\in\T$. It is clear that  $0\leq w^\eps_i\leq1$ for any $i\in \T$.
Furthermore, meticulous but rather straightforward computations show that $(w^\eps_i)_{i\in\T}$ satisfies
\begin{equation}
\pa_tw^\eps_i=\eps\pa_x^2w^\eps_i+2\eps\pa_xw^\eps_i\pa_x(\ln{(\rho^\eps)})+\pa_xw^\eps_i\pa_x\rho^\eps+\widetilde{f_i}(x,\rho^\eps-\eps,\wse,\wpe,\wee,\wge,c^\eps_{\rm b}),
\end{equation}
where we define the source terms as follow
\begin{align*}
\widetilde\fsc(x,\rho^\eps-\eps,\wse,\wpe,\wee,\wge,c^\eps_{\rm b}) &\coloneq \fsc(x,\rho^\eps-\eps,\wse,c^\eps_{\rm b})-\wse f(x,\rho^\eps-\eps,\wse,\wpe,\wee,\wge,c^\eps_{\rm b}),
\\
\widetilde\fpc(x,\rho^\eps-\eps,\wse,\wpe,\wee,\wge,c^\eps_{\rm b}) &\coloneq \fpc(x,\rho^\eps-\eps,\wse,\wpe,c^\eps_{\rm b})-\wpe f(x,\rho^\eps-\eps,\wse,\wpe,\wee,\wge,c^\eps_{\rm b}),
\\
\widetilde\fent(x,\rho^\eps-\eps,\wse,\wpe,\wee,\wge,c^\eps_{\rm b}) &\coloneq \fent(x,\rho^\eps-\eps,\wpe,\wee,c^\eps_{\rm b})-\wee f(x,\rho^\eps-\eps,\wse,\wpe,\wee,\wge,c^\eps_{\rm b}),
\\
\widetilde\fgc(x,\rho^\eps-\eps,\wse,\wpe,\wee,\wge,c^\eps_{\rm b}) &\coloneq \fgc(x,\rho^\eps-\eps,\wpe,\wge,c^\eps_{\rm b})-\wge f(x,\rho^\eps-\eps,\wse,\wpe,\wee,\wge,c^\eps_{\rm b}).
\end{align*}
Let us now establish the $BV$-estimate on $w_i^\eps$. We first consider $I_\eta(x)\coloneqq\sqrt{x^2+\eta^2}$, for all $0<\eta\leq1$, a regular approximation (as $\eta\to 0$) of the absolute value ($I^{'}_\eta$ approaches the sign function). Moreover, similarly as $\rho^\eps$, we can see that $\rho_i^\eps\in L^2(0,T;H^3(\Omega))$ and since $\rho^\eps>\eps$, $w_i^\eps$ inhibits the same regularity of $\rho_i^\eps$ for all $i\in\T$. Then, we have
\begin{align} \label{eq I eta}
&\frac{d}{dt}\int_\Omega\sum_{i\in\T}
I_\eta(\pa_x w_i^\eps) \dd x=\sum_{i\in\T}\int_\Omega I_\eta^{'}(\pa_x w_i^\eps)\pa_t\pa_x w_i^\eps\dd x\\
&=\sum_{i\in\T}\Bigg(\eps\int_\Omega I_\eta^{'}(\pa_xw_i^\eps)\pa_x^3w_i^\eps \dd x+2\eps\int_\Omega I_\eta^{'}(\pa_x w_i^\eps)\pa_x\left(\pa_x w_i^\eps \pa_x (\ln(\rho^\eps))\right) \dd x\nonumber\\
&+\int_\Omega I_\eta^{'}(\pa_xw_i^\eps)\pa_x\left(\pa_x w_i^\eps \pa_x \rho^\eps\right) \dd x+\int_\Omega I_\eta^{'}(\pa_xw_i^\eps)\pa_x[\widetilde{f_i}(x,\rho^\eps-\eps,\wse,\wpe,\wee,\wge,c^\eps_{\rm b}) ]\dd x\Bigg).\nonumber
\end{align}
We now treat the terms of the above equation. The first term in the r.h.s.~is nonpositive. Indeed, by convexity of $I_\eta$  and after an integration by parts, we have
\begin{equation*}
    \eps\int_\Omega I_\eta^{'}(\pa_xw_i^\eps)\pa_x^3w_i^\eps \dd x=-\eps\int_\Omega I_\eta^{''}(\pa_xw_i^\eps)|\pa_x^2w_i^\eps|^2 \dd x\leq 0.
\end{equation*}
Now, since the second and third terms on the r.h.s.~are handled in the same way, we detail only the argument for the first one; the second follows analogously:
\begin{multline*}
    \int_\Omega I_\eta^{'}(\pa_xw_i^\eps)\pa_x\left(\pa_x w_i^\eps \pa_x (\ln(\rho^\eps))\right) \,\dd x=\int_\Omega I_\eta^{'}(\pa_xw_i^\eps)\left[\pa_x^2 w_i^\eps \pa_x (\ln(\rho^\eps)) +\pa_x w_i^\eps \pa_x^2 (\ln(\rho^\eps))\right] \dd x\\=\int_\Omega \pa_x[I_\eta(\pa_xw_i^\eps)]\pa_x (\ln(\rho^\eps)) \,\dd x+\int_\Omega \left( I_\eta^{'}(\pa_xw_i^\eps)\pa_x w_i^\eps-I_\eta(\pa_xw_i^\eps)\right) \pa_x^2 (\ln(\rho^\eps)) \,\dd x\\+\int_\Omega I_\eta(\pa_xw_i^\eps)\pa_x^2 (\ln(\rho^\eps))\, \dd x.
\end{multline*}
Note that $$ I_\eta^{'}(\pa_xw_i^\eps)\pa_x w_i^\eps-I_\eta(\pa_xw_i^\eps)= -\frac{\eta^2}{\sqrt{(\pa_x w_i^\eps)^2+\eta^2}}.$$
Then, we have 
\begin{multline*}
     \int_\Omega I_\eta^{'}(\pa_xw_i^\eps)\pa_x\left(\pa_x w_i^\eps \pa_x (\ln(\rho^\eps))\right) \dd x=\int_\Omega \pa_x\left(I_\eta(\pa_x w_i^\eps) \pa_x (\ln(\rho^\eps))\right) \dd x\\-\int_\Omega \frac{\eta^2\left(\rho^\eps\pa_x^2\rho^\eps-(\pa_x\rho^\eps)^2\right)}{(\rho^\eps)^2\sqrt{(\pa_x w_i^\eps)^2+\eta^2}}\dd x.
\end{multline*}
Moreover, thanks to the boundary condition  \eqref{boundary rho eps}, the first term in the r.h.s.~vanishes and  we denoted the second term by $\eta^2R_\eps$. Hence, we obtain
\begin{equation*}
    \int_\Omega I_\eta^{'}(\pa_xw_i^\eps)\pa_x\left(\pa_x w_i^\eps \pa_x (\ln(\rho^\eps))\right) \dd x\leq\eta^2 R_\eps.
\end{equation*}
Back to the equation \eqref{eq I eta} and knowing that $I_\eta^{'}(\pa_x w_i^\eps)\leq1$, it follows that
\begin{equation*}\frac{d}{dt}\int_\Omega\sum_{i\in\T}
I_\eta(\pa_x w_i^\eps) \dd x\leq \eta^2\, |R_\eps| +\int_\Omega\sum_{i\in\T} |\pa_x[\widetilde{f_i}(x,\rho^\eps-\eps,\wse,\wpe,\wee,\wge,c^\eps_{\rm b})]| \dd x,
\end{equation*}
After an integration on $(0,t)$, we get 
\begin{multline*}
\int_\Omega\sum_{i\in\T}
I_\eta(\pa_x w_i^\eps)(t) \dd x\leq \int_\Omega\sum_{i\in\T}
I_\eta(\pa_x w_i^{\eps,0}) \,\dd x +\eta^2\int_0^t |R_\eps|\dd s \\+\int_0^t\int_\Omega\sum_{i\in\T} |\pa_x[\widetilde{f_i}(x,\rho^\eps-\eps,\wse,\wpe,\wee,\wge,c^\eps_{\rm b})]| \dd x\,\dd s.
\end{multline*}
Thanks to \eqref{eq : pax rho eps}, the second term in the r.h.s. goes to $0$ as $\eta\to 0$. Thus, by passing to the limit as $ \eta \to0$, we obtain 
\begin{multline*}
   \sum_{i\in\T}\int_\Omega|\pa_x w_i^\eps|(t)\,\dd x\leq \int_\Omega\sum_{i\in\T}
|\pa_x w_i^{\eps,0}|\, \dd x\\+\int_0^t\int_\Omega \sum_{i\in\T}|\pa_x[\widetilde{f_i}(x,\rho^\eps-\eps,\wse,\wpe,\wee,\wge,c^\eps_{\rm b})]| \dd x\,\dd s.
\end{multline*}
Now, since $w^\eps(\cdot,t)\in W^{1,1}(\Omega),$ 
we have 
\begin{multline*}
\sum_{i\in\T}TV(w_i^\eps)(t)=\sum_{i\in\T}\int_\Omega|\pa_x w_i^\eps(t)|\dd x\leq\sum_{i\in\T} TV( w_i^{\eps,0})\\+\int_0^t\int_\Omega\sum_{i\in\T}\Big(\pa_1\widetilde
     f_i+\pa_x\rho^\eps\pa_2\widetilde
     f_i +\pa_x\wse\pa_3\widetilde
     f_i +\pa_x\wpe\pa_4\widetilde f_i\\
     +\pa_x\wee\pa_5\widetilde
     f_i+\pa_x\wge\pa_6\widetilde
     f_i+\pa_xc_{\rm b}^\eps \pa_7\widetilde
     f_i\Big)\dd x\dd s,
\end{multline*}
where, for $j\in\{1,..7\}$, $\pa_j$ denotes the derivative with respect to each component of $\widetilde f_i$. Moreover, note that all $\pa_j\widetilde f_i$ are bounded, then we get 
\begin{align*}
      \sum_{i\in\T} TV( w_i^\eps)(t)  \leq \sum_{i\in\T}TV( w_i^{\eps,0})+  C\, q_\infty \left(\int_0^t\sum_{i\in\T} TV( w_i^\eps)\dd s+\int_0^t\int_\Omega |\pa_x\rho^\eps|+|\pa_x c_{\rm b}^\eps|\dd x\,\dd s +K\right),
\end{align*}
where $C$ is a constant independent of $\eps$ which depends on $\|R_{\cdot,\cdot}\|_{W^{1,\infty}}$, $\|\overline{R}_{\cdot,\cdot}\|_{W^{1,\infty}}$, and $\|\underline{R}_{\cdot,\cdot}\|_{W^{1,\infty}}$ and $K$ depends on $T$ and $\pa_1 \widetilde{f_i}  $ that means depends on  $\|R_{\cdot,\cdot}\|_{W^{1,\infty}}$. Moreover, using Cauchy--Schwarz inequality, we have  
\[\int_0^T \int_\Omega|\pa_x\rho^\eps| \dd x \dd t\leq T^{1/2}\|\pa_x\rho^\eps\|_{L^2(\Omega\times(0,T))},\]
and similarly for $c_{\rm b}^\eps$. In addition, we denote \[L\coloneq T^{1/2}\left(\|\pa_x\rho^\eps\|_{L^2(\Omega\times(0,T))}+\|\pa_xc^\eps\|_{L^2(\Omega\times(0,T))}\right)+K.\]
After applying Gronwall's lemma, we obtain
\begin{align*}
    \sum_{i\in\T} TV( w_i^\eps)\leq  \left(TV( w_i^{\eps,0})+L\right)\left(1+C\,q_\infty T e^{C q_\infty T}\right).
\end{align*}
Furthermore, for all $i\in\T$ and by the definition~\eqref{def TV} of the semi-norm $TV$, we remark that $TV(w^{\eps,0}_i)$ converges, as $\eps\to 0$, to $ TV(w^{0}_i)$, where we recall that $w_i^0\in BV(\Omega)$ according to assumption \emph{{\bf (H1)}}. Hence, $TV(w_i^{\eps,0})\leq~TV(w_i^0)+~1$ and we get a uniform bound in $\eps$.
Finally, since $\rho_i^\eps=w_i^\eps\rho^\eps$ for all $i\in\T$. Then, $\pa_x\rho_i^\eps=w_i^\eps\pa_x\rho^\eps+\rho^\eps\pa_x w_i^\eps$ is uniformly bounded in $ L^\infty(0,T;L^1(\Omega))$ by using~\eqref{eq : pax rho eps} and the fact that $\pa_x w_i^\eps$ is uniformly bounded in $L^\infty(0,T;L^1(\Omega))$. Therefore, we deduce that the estimate~\eqref{eq : unif eps} holds true.
\end{proof}

\subsection{Proof of Theorem~\ref{thm1}}\label{subsec : limit}
We have shown in Proposition~\ref{prop : semi discrete} and Proposition~\ref{propistion 2}, the existence of $(\rho_i^\eps)_{i\in\T}$ and $c_{\rm b}^\eps$ (weak) solutions of~\eqref{eq : rhosc eps}--\eqref{def : IC cbeps} satisfying the uniform w.r.t.~$\eps$ estimate~\eqref{eq : unif eps}. Hence, for every $\varphi\in~ L^2(0,T;H^1(\Omega))$, we have 
\begin{multline*}
    \left|\int_0^T\int_\Omega \pa_t\rho^\eps\varphi \,dx\,dt\right|\leq\eps\|\pa_x\rho^\eps\|_{L^2(\Omega\times(0,T))}\|\pa_x\varphi\|_{L^2(\Omega\times(0,T))}\\+M_\infty^\eps\|\pa_x\rho^\eps\|_{L^2(\Omega\times(0,T))}\|\pa_x\varphi\|_{L^2(\Omega\times(0,T))}+\|f\|_{L^2(\Omega\times(0,T))}\|\varphi\|_{L^2(\Omega\times(0,T))}.
\end{multline*}
Thus, for $\eps<1,$  we get
\[
\|\pa_t\rho^\eps\|_{L^2(0,T;(H^1(\Omega))')}\leq C.
\]
Consequently, using Aubin-Lions lemma, we obtain, up to a subsequence, as $\eps\to0$, that
\begin{align*}
\pa_x\rho^{\eps}&\rightharpoonup \pa_x\rho \quad\text{ weakly in }L^2(\Omega\times(0,T)),\\
\rho^\eps &\longrightarrow \rho \quad\text{  strongly in } L^2(\Omega\times(0,T)),\\
\pa_t \rho^\eps &\longrightarrow\pa_t \rho \,\text{ weakly in } L^2(0,T;H^1(\Omega)').
\end{align*} 

Now, for any $\varphi\in L^2(0,T;H^2(\Omega))$, we have 
\begin{multline*}
    \left|\int_0^T\int_\Omega \pa_t\rho_i^\eps\varphi \,dx\,dt\right|\leq\eps\, M^\eps_\infty \|\pa^2_x\varphi\|_{L^2(\Omega\times(0,T))}\\+M_\infty^\eps\|\pa_x\rho^\eps\|_{L^2(\Omega\times(0,T))}\|\pa_x\varphi\|_{L^2(\Omega\times(0,T))}+\|f_i\|_{L^2(\Omega\times(0,T))}\|\varphi\|_{L^2(\Omega\times(0,T))}.
\end{multline*}
Hence, for $\eps<1$, we obtain $$\|\pa_t\rho_i^\eps\|_{L^2(0,T;(H^2(\Omega))')}\leq C.$$
In addition, thanks to \eqref{eq : unif eps}, we recall that $\rho_i^\eps$ is uniformly bounded in $L^\infty(0,T;BV(\Omega))$. Moreover, since $BV(\Omega)\hookrightarrow L^p(\Omega)$ compactly for all $1\leq p<+\infty$, we deduce from Aubin–Lions lemma that for any $i \in \T$, up to a subsequence, as $\eps\to0$ it holds 
\begin{align*}
\rho_i^\eps &\longrightarrow\rho_i \quad  \text{ strongly in } L^\infty(0,T;L^p(\Omega)), \quad  \forall 1\leq p<+\infty,\\
\pa_t \rho_i^\eps &\longrightarrow\pa_t \rho_i \,\, \text{ weakly in } L^2(0,T;H^2(\Omega)').
\end{align*}
Hence, $\rho_i^\eps$ converges (up to subsequence) a.e. to $\rho_i$ in $\Omega\times(0,T)$. Moreover, as $\rho^\eps \leq M_\infty^\eps$ a.e. in $\Omega \times(0,T)$, we conclude thanks to Lebesgues dominated convergence theorem that for every $\psi\in L^2(0,T;L^2(\Omega))$, up to a subsequence, we have 
\begin{equation*}
\rho_i^\eps\psi\longrightarrow\rho_i\psi \quad\text{ strongly in } L^2(\Omega\times(0,T)) \mbox{ as } \eps \to 0.
\end{equation*}

Let us now notice that $c_{\rm b}^\eps$ is uniformly bounded, for $\eps<1$, in $L^2(0,T;H)$ by~\eqref{eq : unif eps} and the Poincar\'e inequality (as in~\eqref{Poincare}). Moreover, it is clear that 
 for $\eps<1$, we have $$\|\pa_t c_{\rm b}^\eps\|_{L^2(0,T;H')}\leq C.$$
 Then, applying once more time Aubin–Lions lemma, we obtain, up to a subsequence, as $\eps\to0$ that 
 \begin{align*}
 \pa_t c_{\rm b}^\eps& \longrightarrow \pa_t c_{\rm b} \quad\text{ weakly in } L^2(0,T;H'),\\
     c_{\rm b}^\eps &\longrightarrow c_{\rm b}  \quad\text{ strongly in } L^2(\Omega\times(0,T)),\\
      \dfrac{c^\eps_{\rm b}+c_{\rm b}^d}{1+c^\eps_{\rm b}+c_{\rm b}^d}&\longrightarrow \dfrac{c_{\rm b}+c_{\rm b}^d}{1+c_{\rm b}+c_{\rm b}^d} \quad\text{ strongly in } L^2(\Omega\times(0,T)).
 \end{align*}
 Finally, by assumption \emph{\bf{(H4)}}, all the source terms are Lipschitz continuous with respect to all variables and all these variables converge (up to a subsequence) strongly in  $L^2(\Omega\times(0,T))$. Then, as $\eps\to0$, we have that
\begin{equation*}
    f_i(x,\rho^\eps,(\rho^\eps_j)_{j\in\mathcal{E}_i},c^\eps_{\rm b})\longrightarrow f_i(x,\rho,(\rho_j)_{j\in\mathcal{E}_i},c_{\rm b}) \text{ strongly in } L^2(\Omega\times(0,T)).
\end{equation*}
Therefore, by combining all the previous results and by passing to the limit in \eqref{weak eq rhoi eps}, we get \eqref{ eq rhoi weak final}, where we have used the weak $L^2$- strong $L^2$ convergence in the product $\rho_i^\eps\pa_x\rho^\eps\pa_x\varphi.$ 
Similarly, by passing to the limit in \eqref{weak eq cb eps} we obtain \eqref{ eq cb weak final }. This concludes the proof of Theorem~\ref{thm1}.




\bibliographystyle{plain}
\bibliography{biblio}

@article{DL89,
 author = {Di Perna, R. J. and Lions, P. L.},
 title = {Ordinary differential equations, transport theory and {Sobolev} spaces},
 fjournal = {Inventiones Mathematicae},
 journal = {Invent. Math.},
 issn = {0020-9910},
 volume = {98},
 number = {3},
 pages = {511--547},
 year = {1989},
 language = {English},
 doi = {10.1007/BF01393835},
 url = {https://eudml.org/doc/143741},
 zbMATH = {4140224},
 Zbl = {0696.34049}
}

@article{BGHP85,
 author = {Bertsch, M. and Gurtin, M. E. and Hilhorst, D. and Peletier, L. A.},
 title = {On interacting populations that disperse to avoid crowding: preservation of segregation},
 fjournal = {Journal of Mathematical Biology},
 journal = {J. Math. Biol.},
 issn = {0303-6812},
 volume = {23},
 pages = {1--13},
 year = {1985},
 language = {English},
 doi = {10.1007/BF00276555},
 zbMATH = {3960096},
 Zbl = {0596.35074}
}

@PHDTHESIS{darri2020,
url = "http://www.theses.fr/2020UPASM008",
title = "Modélisation du dialogue hôte-microbiote au voisinage de l’épithélium de l'intestin distal",
author = "Darrigade, L.",
year = "2020",
school = "Université Paris-Saclay",
url = "http://www.theses.fr/2020UPASM008/document",
}

@article{Azais_reviewPDMP14,
 author = {Aza{\"{\i}}s, R. and Bardet, J.-B. and G{\'e}nadot, A. and Krell, N. and Zitt, P.-A.},
 title = {Piecewise deterministic {Markov} process -- recent results},
 fjournal = {European Series in Applied and Industrial Mathematics (ESAIM): Proceedings},
 journal = {ESAIM, Proc.},
 issn = {1270-900X},
 volume = {44},
 pages = {276--290},
 year = {2014},
 language = {English},
 doi = {10.1051/proc/201444017},
 zbMATH = {6510049},
 Zbl = {1331.60153}
}

@article{Davis84,
 author = {Davis, M. H. A.},
 title = {Piecewise-deterministic {Markov} processes: {A} general class of non- diffusion stochastic models},
 fjournal = {Journal of the Royal Statistical Society. Series B},
 journal = {J. R. Stat. Soc., Ser. B},
 issn = {0035-9246},
 volume = {46},
 pages = {353--388},
 year = {1984},
 language = {English},
 zbMATH = {3901778},
 Zbl = {0565.60070}
}

@article{BGH871,
author = {M. Bertsch and M.E. Gurtin and D. Hilhorst},
title = {On interacting populations that disperse to avoid crowding: the case of equal dispersal velocities},
journal = {Nonlinear Analysis: Theory, Methods \& Applications},
volume = {11},
number = {4},
pages = {493-499},
year = {1987},
issn = {0362-546X},
doi = {https://doi.org/10.1016/0362-546X(87)90067-8}
}

@article{BGH872,
 author = {Bertsch, M. and Gurtin, M. E. and Hilhorst, D.},
 title = {On a degenerate diffusion equation of the form {{\(c(z)_ t={{\phi}} (z_ x)_ x\)}} with application to population dynamics},
 fjournal = {Journal of Differential Equations},
 journal = {J. Differ. Equations},
 issn = {0022-0396},
 volume = {67},
 pages = {56--89},
 year = {1987},
 language = {English},
 doi = {10.1016/0022-0396(87)90139-2},
 zbMATH = {4013159},
 Zbl = {0624.35049}
}

@article{BHIM12,
 author = {Bertsch, M. and Hilhorst, D. and Izuhara, H. and Mimura, M.},
 title = {A nonlinear parabolic-hyperbolic system for contact inhibition of cell-growth},
 fjournal = {Differential Equations and Applications},
 journal = {Differ. Equ. Appl.},
 issn = {1847-120X},
 volume = {4},
 number = {1},
 pages = {137--157},
 note = {Id/No 09},
 year = {2012},
 language = {English},
 url = {files.ele-math.com/articles/dea-04-09.pdf},
 zbMATH = {6037191},
 Zbl = {1238.35161}
}

@article{MurTog15,
 author = {Murakawa, H. and Togashi, H.},
 title = {Continuous models for cell-cell adhesion},
 fjournal = {Journal of Theoretical Biology},
 journal = {J. Theor. Biol.},
 issn = {0022-5193},
 volume = {374},
 pages = {1--12},
 year = {2015},
 language = {English},
 doi = {10.1016/j.jtbi.2015.03.002},
 zbMATH = {6606638},
 Zbl = {1341.92019}
}

@article{CHS18,
author = {Carrillo, J. and Huang, Y. and Schmidtchen, M.},
title = {Zoology of a Nonlocal Cross-Diffusion Model for Two Species},
journal = {SIAM Journal on Applied Mathematics},
volume = {78},
number = {2},
pages = {1078-1104},
year = {2018},
doi = {10.1137/17M1128782}
}

@article{ChaLol05,
 author = {Chaplain, M. A. J. and Lolas, G.},
 title = {Mathematical modelling of cancer cell invasion of tissue: the role of the urokinase plasminogen activation system},
 fjournal = {M\(^3\)AS. Mathematical Models \& Methods in Applied Sciences},
 journal = {Math. Models Methods Appl. Sci.},
 issn = {0218-2025},
 volume = {15},
 number = {11},
 pages = {1685--1734},
 year = {2005},
 language = {English},
 doi = {10.1142/S0218202505000947},
 zbMATH = {2243293},
 Zbl = {1094.92039}
}

@book{Preziosi03,
 editor = {Preziosi, Luigi},
 title = {Cancer modelling and simulation},
 isbn = {1-58488-361-8; 978-0-203-49489-9},
 year = {2003},
 publisher = {Boca Raton, FL: Chapman {and} Hall/CRC},
 language = {English},
 doi = {10.1201/9780203494899},
 zbMATH = {1912158},
 Zbl = {1039.92022}
}

@article{APS06,
 author = {Armstrong, N. J. and Painter, K. J. and Sherratt, J. A.},
 title = {A continuum approach to modelling cell-cell adhesion},
 fjournal = {Journal of Theoretical Biology},
 journal = {J. Theor. Biol.},
 issn = {0022-5193},
 volume = {243},
 number = {1},
 pages = {98--113},
 year = {2006},
 language = {English},
 doi = {10.1016/j.jtbi.2006.05.030},
 url = {europepmc.org/articles/pmc1941683},
 zbMATH = {7249364},
 Zbl = {1447.92113}
}

@article{CFSS_JKO_18,
 author = {Carrillo, J. A. and Fagioli, S. and Santambrogio, F. and Schmidtchen, M.},
 title = {Splitting schemes and segregation in reaction cross-diffusion systems},
 fjournal = {SIAM Journal on Mathematical Analysis},
 journal = {SIAM J. Math. Anal.},
 issn = {0036-1410},
 volume = {50},
 number = {5},
 pages = {5695--5718},
 year = {2018},
 language = {English},
 doi = {10.1137/17M1158379},
 zbMATH = {6970868},
 Zbl = {1402.35147}
}

@article{BHIMW20,
 author = {Bertsch, M. and Hilhorst, D. and Izuhara, H. and Mimura, M. and Wakasa, T.},
 title = {A nonlinear parabolic-hyperbolic system for contact inhibition and a degenerate parabolic {Fisher}-{KPP} equation},
 fjournal = {Discrete and Continuous Dynamical Systems},
 journal = {Discrete Contin. Dyn. Syst.},
 issn = {1078-0947},
 volume = {40},
 number = {6},
 pages = {3117--3142},
 year = {2020},
 language = {English},
 doi = {10.3934/dcds.2019226},
 zbMATH = {7183831},
 Zbl = {1435.35126}
}

@article{BPM10,
 author = {Bertsch, M. and Dal Passo, R. and Mimura, M.},
 title = {A free boundary problem arising in a simplified tumour growth model of contact inhibition},
 fjournal = {Interfaces and Free Boundaries},
 journal = {Interfaces Free Bound.},
 issn = {1463-9963},
 volume = {12},
 number = {2},
 pages = {235--250},
 year = {2010},
 language = {English},
 doi = {10.4171/IFB/233},
 zbMATH = {5789981},
 Zbl = {1197.35296}
}

@article{GalSel15,
 author = {Galiano, G. and Selgas, V.},
 title = {Analysis of a splitting-differentiation population model leading to cross-diffusion},
 fjournal = {Computers \& Mathematics with Applications},
 journal = {Comput. Math. Appl.},
 issn = {0898-1221},
 volume = {70},
 number = {12},
 pages = {2933--2945},
 year = {2015},
 language = {English},
 doi = {10.1016/j.camwa.2015.10.005},
 zbMATH = {7258727},
 Zbl = {1443.92152}
}

@article{DHCLL22,
 author = {Darrigade, L. and Haghebaert, M. and Cherbuy, C. and Labarthe, S. and Laroche, B.},
 title = {A {PDMP} model of the epithelial cell turn-over in the intestinal crypt including microbiota-derived regulations},
 fjournal = {Journal of Mathematical Biology},
 journal = {J. Math. Biol.},
 issn = {0303-6812},
 volume = {84},
 number = {7},
 pages = {67},
 note = {Id/No 60},
 year = {2022},
 language = {English},
 doi = {10.1007/s00285-022-01766-8},
 zbMATH = {7556009},
 Zbl = {1496.92015}
}

@article{Jue15,
 author = {J{\"u}ngel, A.},
 title = {The boundedness-by-entropy method for cross-diffusion systems},
 fjournal = {Nonlinearity},
 journal = {Nonlinearity},
 issn = {0951-7715},
 volume = {28},
 number = {6},
 pages = {1963--2001},
 year = {2015},
 language = {English},
 doi = {10.1088/0951-7715/28/6/1963},
 zbMATH = {6465032},
 Zbl = {1326.35175}
}

@article{HoJu25,
 author = {Hopf, K. and J{\"u}ngel, A.},
 title = {Convergence of a finite-volume scheme and dissipative measure-valued-strong stability for a hyperbolic-parabolic cross-diffusion system},
 fjournal = {Numerische Mathematik},
 journal = {Numer. Math.},
 issn = {0029-599X},
 volume = {157},
 number = {3},
 pages = {951--992},
 year = {2025},
 language = {English},
 doi = {10.1007/s00211-025-01474-7},
 zbMATH = {8052718}
}

@article{DrHoJu23,
 author = {Druet, P.-E. and Hopf, K. and J{\"u}ngel, A.},
 title = {Hyperbolic-parabolic normal form and local classical solutions for cross-diffusion systems with incomplete diffusion},
 fjournal = {Communications in Partial Differential Equations},
 journal = {Commun. Partial Differ. Equations},
 issn = {0360-5302},
 volume = {48},
 number = {6},
 pages = {863--894},
 year = {2023},
 language = {English},
 doi = {10.1080/03605302.2023.2212479},
 zbMATH = {7755393},
 Zbl = {1526.35243}
}

@article{DrJu20,
 author = {Druet, P.-E. and J{\"u}ngel, A.},
 title = {Analysis of cross-diffusion systems for fluid mixtures driven by a pressure gradient},
 fjournal = {SIAM Journal on Mathematical Analysis},
 journal = {SIAM J. Math. Anal.},
 issn = {0036-1410},
 volume = {52},
 number = {2},
 pages = {2179--2197},
 year = {2020},
 language = {English},
 doi = {10.1137/19M1301473},
 zbMATH = {7201607},
 Zbl = {1442.35188}
}

@article{PQV14,
 author = {Perthame, B. and Quir{\'o}s, F. and V{\'a}zquez, J. L.},
 title = {The {Hele}-{Shaw} asymptotics for mechanical models of tumor growth},
 fjournal = {Archive for Rational Mechanics and Analysis},
 journal = {Arch. Ration. Mech. Anal.},
 issn = {0003-9527},
 volume = {212},
 number = {1},
 pages = {93--127},
 year = {2014},
 language = {English},
 doi = {10.1007/s00205-013-0704-y},
 zbMATH = {6303385},
 Zbl = {1293.35347}
}

@article{BPPS20,
 author = {Bubba, F. and Perthame, B. and Pouchol, C. and Schmidtchen, M.},
 title = {Hele-Shaw limit for a system of two reaction-({Cross}-)diffusion equations for living tissues},
 fjournal = {Archive for Rational Mechanics and Analysis},
 journal = {Arch. Ration. Mech. Anal.},
 issn = {0003-9527},
 volume = {236},
 number = {2},
 pages = {735--766},
 year = {2020},
 language = {English},
 doi = {10.1007/s00205-019-01479-1},
 zbMATH = {7178292},
 Zbl = {1435.35390}
}

@article{PerVau15,
 author = {Perthame, B. and Vauchelet, N.},
 title = {Incompressible limit of a mechanical model of tumour growth with viscosity},
 fjournal = {Philosophical Transactions A. Royal Society of London},
 journal = {Philos. Trans. A, R. Soc. Lond.},
 issn = {1364-503X},
 volume = {373},
 number = {2050},
 pages = {16},
 note = {Id/No 20140283},
 year = {2015},
 language = {English},
 doi = {10.1098/rsta.2014.0283},
 zbMATH = {6674737},
 Zbl = {1353.35294}
}

@article{GuPi84,
 author = {Gurtin, Morton E. and Pipkin, A. C.},
 title = {A note on interacting populations that disperse to avoid crowding},
 fjournal = {Quarterly of Applied Mathematics},
 journal = {Q. Appl. Math.},
 issn = {0033-569X},
 volume = {42},
 pages = {87--94},
 year = {1984},
 language = {English},
 doi = {10.1090/qam/736508},
 zbMATH = {3849040},
 Zbl = {0534.92021}
}

@article {MR2890969,
    AUTHOR = {Dreher, Michael and J\"ungel, Ansgar},
     TITLE = {Compact families of piecewise constant functions in
              {$L^p(0,T;B)$}},
   JOURNAL = {Nonlinear Anal.},
  FJOURNAL = {Nonlinear Analysis. Theory, Methods \& Applications. An
              International Multidisciplinary Journal},
    VOLUME = {75},
      YEAR = {2012},
    NUMBER = {6},
     PAGES = {3072--3077},
      ISSN = {0362-546X,1873-5215},
   MRCLASS = {46B50 (35A15 35A35)},
  MRNUMBER = {2890969},
MRREVIEWER = {Narcisse\ Randrianantoanina},
       DOI = {10.1016/j.na.2011.12.004},
       URL = {https://doi.org/10.1016/j.na.2011.12.004},
}

@book {MR2597943,
    AUTHOR = {Evans, Lawrence C.},
     TITLE = {Partial differential equations},
    SERIES = {Graduate Studies in Mathematics},
    VOLUME = {19},
   EDITION = {Second},
 PUBLISHER = {American Mathematical Society, Providence, RI},
      YEAR = {2010},
     PAGES = {xxii+749},
      ISBN = {978-0-8218-4974-3},
   MRCLASS = {35-01},
  MRNUMBER = {2597943},
MRREVIEWER = {Diego\ M.\ Maldonado},
       DOI = {10.1090/gsm/019},
       URL = {https://doi.org/10.1090/gsm/019},
}

@book{roubivcek2005nonlinear,
  title={Nonlinear partial differential equations with applications},
  author={Roub{\'\i}{\v{c}}ek, T.},
  year={2005},
  publisher={Springer}
}

\end{document}